\theoremstyle{plain}
\newtheorem{theorem}{Theorem}
\newtheorem{proposition}{Proposition}
\theoremstyle{remark}
\newtheorem*{Remark 1}{Remark 1}
\newtheorem*{Remark 2}{Remark 2}
\newtheorem*{Remark 3}{Remark 3}
\newtheorem*{Remark 4}{Remark 4}
\numberwithin{equation}{section}
\begin{document}

\title[]
 {Transience/Recurrence and Growth Rates for Diffusion Processes in Time-Dependent Domains}

\author{Ross G. Pinsky}
\address{Department of Mathematics\\
Technion---Israel Institute of Technology\\
Haifa, 32000\\ Israel}
\email{ pinsky@math.technion.ac.il}
\urladdr{http://www.math.technion.ac.il/~pinsky/}

\subjclass[2000]{ 60J60} \keywords{transience, recurrence, diffusion, time-dependent domain, reflection, positive recurrence,
growth rate}
\date{}

\begin{abstract}
Let $\mathcal{K}\subset R^d$, $d\ge2$,
be a smooth, bounded domain  satisfying $0\in\mathcal{K}$, and let $f(t),\ t\ge0$, be a  smooth, continuous, nondecreasing
function satisfying $f(0)>1$. Define $D_t=f(t)\mathcal{K}\subset R^d$. Consider a diffusion process
corresponding to the generator $\frac12\Delta+b(x)\nabla$ in the time-dependent domain $D_t$   with normal reflection
at the time-dependent boundary.
Consider also the one-dimensional diffusion process corresponding to
the generator $\frac12\frac{d^2}{dx^2}+B(x)\frac d{dx}$ on the time-dependent domain $(1,f(t))$ with reflection
 at the boundary.
We give precise conditions for transience/recurrence
of the one-dimensional process in terms of the growth rates of $B(x)$ and $f(t)$.
In the recurrent case, we also investigate positive recurrence, and in the transient case,
we also consider the asymptotic growth rate  of the process.
Using the one-dimensional results, we
give conditions for transience/recurrence
of the multi-dimensional process in terms of the growth rates of $B^+(r)$, $B^-(r)$ and $f(t)$,
where
$B^+(r)=\max_{|x|=r}b(x)\cdot\frac x{|x|}$ and
$B^-(r)=\min_{|x|=r}b(x)\cdot\frac x{|x|}$.

\end{abstract}

\maketitle

\section{Introduction and Statement of  Results}
Let $\mathcal{K}\subset R^d$, $d\ge2$,
be a bounded domain with $C^3$-boundary satisfying $0\in\mathcal{K}$, and let $f(t),\ t\ge0$, be a  continuous, nondecreasing
$C^3$-function satisfying $f(0)>1$. Define $D_t=f(t)\mathcal{K}\subset R^d$.
It is known that one can define a Brownian motion $X(t)$ with normal reflection at the boundary in the time-dependent
domain $\{(x,t): x\in D_t, t\ge0\}$. More precisely, one has for $0\le s<t$,
$$
\begin{aligned}
&X(t)=x+W(t)-W(s)+\int_s^t 1_{\partial D_u}(X(u))n(u,X(u))d\mathcal{L}_u,\\
&\mathcal{L}_t=\int_s^t1_{\partial D_u}(X(u))d\mathcal{L}_u,
\end{aligned}
$$
where $W(\cdot)$ is a Brownian motion, $n(u,x)$ is the unit inward normal to $D_u$ at $x\in\partial D_u$ and
 $\mathcal{L}_u$ is the local time up to time $u$ of $X(\cdot)$ at the time-dependent boundary.
See \cite{BCS}.

The  process $X(t)$ is  \it recurrent\rm\ if,  with probability one,
$X(t)\in \mathcal{K}$ at arbitrarily large times $t$, and is
\it transient\rm\ if, with probability zero,
$X(t)\in \mathcal{K}$ at arbitrarily large times $t$.
 As with non-degenerate diffusion processes in unrestricted space, transience is equivalent to
$\lim_{t\to\infty}|X(t)|=\infty$ with probability one.
It is simple to see that the definitions are independent of the starting point and the starting time of the process.
In a recent paper \cite{DHS}, it was  shown
that for $d\ge3$, if $\int^\infty \frac1{f^d(t)}dt<\infty$, then  the process is transient, while if
$\int^\infty \frac1{f^d(t)}dt=\infty$, and an additional technical condition is fulfilled, then the process is  recurrent. The additional technical condition is that either $\mathcal{K}$ is a ball, or that
$\int_0^\infty (f')^2(t)dt<\infty$.
In particular, this result indicates that if for sufficiently large $t$, $f(t)=ct^a$, for some $c>0$, then the
 process is transient if $a>\frac 1d$ and recurrent if $a\le\frac1d$.
 The paper \cite{DHS} also
studies the analogous problem for simple, symmetric random walk in growing domains.

In this paper we study the transience/recurrence dichotomy in the case that the Brownian motion is replaced
by a diffusion process; namely,  Brownian motion with a locally bounded drift $b(x)$. That is, the  generator of the process
when it is away from the boundary is $\frac12\Delta+b(x)\nabla$ instead of $\frac12\Delta$.
Using the Cameron-Martin-Girsanov change-of-measure formula, or alternatively in the case of a Lipschitz drift, by a direct
construction as in \cite{BCS}, one can show that the diffusion process in the time-dependent domain
can be defined.
We will show how the strength of the radial component,  $b(x)\cdot \frac{x}{|x|}$, of the drift,
and the growth rate of the domain--via $f(t)$--affect the transience/recurrence dichotomy.

In fact, we will prove a transience/recurrence dichotomy for  a one-dimensional process.
Our result for the multi-dimensional case will follow readily from the one-dimensional result along
with results in \cite{DHS}.
 Let $f(t)$ be as in the first paragraph.
Consider  the diffusion process corresponding to the generator
 $\frac12\frac{d^2}{dx^2}+B(x)\frac d{dx}$, where $B$ is locally bounded, in the time-dependent domain $(1,f(t))$ with reflection at the
 endpoint $x=1$ (for all times) and
at the endpoint $f(t)$ at time $t$.
If $B(x)=\frac kx$, the process is a Bessel process. When this process is considered on the space $(1,\infty)$ with
reflection at 1, it is recurrent for $k\le \frac12$ and transient for $k>\frac12$. In particular, it is the radial part
of a $d$-dimensional Brownian motion when $k=\frac{d-1}2$. The result of \cite{DHS} noted above can presumably be slightly modified
to show that for $k>\frac12$, the process on the time dependent domain $(1,f(t))$ with reflection at the endpoints
is transient or recurrent according to whether
$\int^\infty \frac1{f^{2k+1}(t)}dt<\infty$ or $\int^\infty \frac1{f^{2k+1}(t)}dt=\infty$.
In this paper we considers drifts that are on a larger order than $\frac1x$.
We will prove the following theorem concerning transience/recurrence.
\begin{theorem}\label{1}
Consider the diffusion process corresponding to the generator
  $\frac12\frac{d^2}{dx^2}+B(x)\frac d{dx}$ in the time-dependent domain $(1,f(t))$, with reflection at
both the fixed endpoint and the time-dependent one.
Let $\gamma>-1$ and $b,c>0$.

\noindent i. Assume that
$$
\begin{aligned}
&B(x)\le bx^\gamma,\ \text{ for sufficiently large}\ x,\\
& f(t)\le c(\log t)^\frac1{1+\gamma},\
\text{for sufficiently large}\ t.
\end{aligned}
$$
 If
$$
\frac{2bc^{1+\gamma}}{1+\gamma}<1,\ \ \ \ \
\text{or}\ \ \
\frac{2bc^{1+\gamma}}{1+\gamma}=1 \ \text{and}\ \gamma\ge-\frac12,
$$
then the process is recurrent.

\noindent ii. Assume that
$$
\begin{aligned}
&B(x)\ge bx^\gamma,\ \text{ for sufficiently large}\ x,\\
& f(t)\ge c(\log t)^\frac1{1+\gamma},\
\text{for sufficiently large}\ t.
\end{aligned}
$$
 If
$$
\frac{2bc^{1+\gamma}}{1+\gamma}> 1,
$$
then the process is transient.

\end{theorem}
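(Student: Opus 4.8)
The plan is to pass to the natural scale of the generator, which removes the drift and realizes $X$ as a time-changed reflecting Brownian motion on a bounded interval, and then to read transience/recurrence off two scalar integrals. Put $s(x)=\exp\big({-}2\int_1^xB\big)$, $g(x)=\int_1^x s$ and $V(x)=\int_x^\infty s$; since $\gamma>-1$ and $b>0$ the scale function is bounded, $g(\infty)=:L_\infty<\infty$, $g+V\equiv L_\infty$, and $L(t):=g(f(t))\uparrow L_\infty$. First I would reduce, via the comparison theorem for one-dimensional reflecting diffusions — raising $B$ or raising the ceiling $f$ raises the process pathwise, and a non-Lipschitz $B$ is handled by approximation together with Girsanov — to the extremal case $B(x)=bx^\gamma$, $f(t)=c(\log t)^{1/(1+\gamma)}$ for large $x,t$: the largest admissible pair in part i, the smallest in part ii.

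With $\rho:=\frac{2bc^{1+\gamma}}{1+\gamma}$, a Laplace-type computation gives $s(f(t))\asymp t^{-\rho}$, $V(f(t))\asymp t^{-\rho}(\log t)^{-\gamma/(1+\gamma)}$ and $\int^\infty s(f(u))B(f(u))\,du\asymp\int^\infty u^{-\rho}(\log u)^{\gamma/(1+\gamma)}\,du$; substituting $v=\log u$, the last integral diverges exactly when $\rho<1$, or $\rho=1$ and $\gamma\ge-\frac12$, and converges under the hypotheses of part ii — this is the source of the constant $\frac{2bc^{1+\gamma}}{1+\gamma}$ and of the threshold $\gamma=-\frac12$. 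It\^o's formula applied to $V(X_t)$, with the two reflection terms of the defining SDE, gives $V(X_t)=V(X_0)-N_t-L^{(1)}_t+\int_0^t s(f(u))\,dL^{(f)}_u$, where $N_t=\int_0^t s(X_u)\,dW_u$, $\langle N\rangle_t=\int_0^t s(X_u)^2\,du=:A(t)$, and $L^{(1)},L^{(f)}$ are the boundary local times at $1$ and at $f(\cdot)$; note $0\le V(X_t)\le L_\infty$. The basic dichotomy tool: if $A(\infty)=\infty$, then by Dambis--Dubins--Schwarz $g(X_t)$ run on the clock $A$ is a reflecting Brownian motion on $(0,L(t))\subset(0,L_\infty)$ alive for all time, hence returns to $0$ infinitely often, so $X_t=1$ at arbitrarily large times and $X$ is recurrent; consequently, on $\{X_t\to\infty\}$ one necessarily has $A(\infty)<\infty$ and $N_t$ converges a.s.

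For part i, since transience is equivalent to $X_t\to\infty$, it suffices to exclude $X_t\to\infty$. When $\rho\le\frac12$ this is immediate, as $A(\infty)\ge\int_0^\infty s(f(u))^2\,du\asymp\int^\infty u^{-2\rho}\,du=\infty$ and the dichotomy tool applies. When $\frac12<\rho\le1$ (so $\gamma\ge-\frac12$ if $\rho=1$) I argue by contradiction: on $\{X_t\to\infty\}$, $L^{(1)}$ is eventually constant and $N_t$ converges, so the It\^o identity forces $\int_0^\infty s(f(u))\,dL^{(f)}_u<\infty$; but from $L^{(f)}_t=X_0+W_t+\int_0^t B(X_u)\,du+L^{(1)}_t-X_t$, together with $1\le X_u<f(u)$, $X_u\to\infty$ and the fact that $W_t$ and $X_t$ are of lower order, one gets $L^{(f)}_t\gtrsim t$ when $\gamma\ge0$ and $L^{(f)}_t\gtrsim t(\log t)^{\gamma/(1+\gamma)}$ when $-1<\gamma<0$ (using $\int_0^t B(X_u)\,du\ge b\int_0^t f(u)^\gamma\,du$ in the latter case); integrating by parts, $\int_0^\infty s(f(u))\,dL^{(f)}_u\gtrsim\int^\infty L^{(f)}_u\,u^{-\rho-1}\,du$, which by the growth of $L^{(f)}$ diverges precisely under the stated hypotheses — contradiction. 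Hence $P(X_t\to\infty)=0$ and $X$ is recurrent.

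For part ii ($\rho>1$ in the extremal case) everything reduces to the a.s. estimate $\int_0^\infty s(f(u))\,dL^{(f)}_u<\infty$: granting it, $V(X_t)+L^{(1)}_t=V(X_0)-N_t+\int_0^t s(f(u))\,dL^{(f)}_u$ is a continuous local martingale plus a bounded increasing term, hence bounded below, hence converges a.s.; so $V(X_t)$ and $L^{(1)}_t$ converge, $V(X_\infty)\in\{0,L_\infty\}$ (a one-dimensional diffusion cannot converge to an interior point, where $B\ne0$), and $L^{(1)}_\infty<\infty$ rules out $V(X_\infty)=L_\infty$ (which would force $X_t\to1$ and $L^{(1)}_\infty=\infty$), leaving $V(X_t)\to0$, i.e. $X_t\to\infty$: transience. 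To obtain the estimate I would bound its expectation: from $g(X_t)=g(X_0)+N_t+L^{(1)}_t-\int_0^t s(f(u))\,dL^{(f)}_u$ and $g\ge0$, $E\int_0^\infty s(f(u))\,dL^{(f)}_u\le g(X_0)+E[L^{(1)}_\infty]$, so it suffices that $E[L^{(1)}_\infty]<\infty$, which I would extract from the structure of the speed measure $s(x)^{-1}\,dx$: its mass on $(1,f(u))$ is $\asymp(B(f(u))s(f(u)))^{-1}$ and concentrated near the ceiling, so the expected rate of local-time accumulation at $x=1$ is $O(B(f(u))s(f(u)))$ and $E[L^{(1)}_\infty]=O\big(\int^\infty B(f(u))s(f(u))\,du\big)<\infty$ because $\rho>1$. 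This last point is the main obstacle: whereas the lower bound on $L^{(f)}$ in part i is elementary, the a.s.\ finiteness of $\int_0^\infty s(f(u))\,dL^{(f)}_u$ (equivalently $E[L^{(1)}_\infty]<\infty$) requires genuine control of how much time the transient process spends near the moving ceiling, and it is here that the concentration of the speed measure toward $x=\infty$ and the hypothesis $\rho>1$ enter in an essential way; the remaining ingredients — the scale change, the Dambis--Dubins--Schwarz time change, the absence of an interior trap, and the comparison theorem — are standard.
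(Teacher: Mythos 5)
Your proposal follows a genuinely different route from the paper's. The paper never passes to natural scale: it works with the original process, fixes dyadic times $t_j=e^j$, and proves recurrence via the conditional Borel--Cantelli lemma applied to the events ``hit $1$ during $[t_j,t_{j+1}]$,'' controlling the conditional probabilities through the scale function $\phi$ for hitting probabilities and through three auxiliary exponential-moment bounds (Propositions 1--3) that dominate excursion times by geometric sums of IID crossing times; transience is obtained similarly via a block decomposition of $[T_{f(j)},T_{f(j+1)})$ and large-deviation estimates for the crossing time across a thin annulus near the ceiling. Your approach instead writes everything in the scale coordinate, applies It\^o's formula to $V(X_t)$ and $g(X_t)$, and reads the dichotomy off the three pieces $N_t$, $L^{(1)}_t$, and $\int_0^t s(f(u))\,dL^{(f)}_u$, using Dambis--Dubins--Schwarz and the boundedness of $g$ as the main structural facts. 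For part (i) this is a legitimate and rather elegant alternative: the same integral $\int^\infty u^{-\rho}(\log u)^{\gamma/(1+\gamma)}\,du$ (with $\rho=\frac{2bc^{1+\gamma}}{1+\gamma}$) emerges as the source of both the threshold $\rho=1$ and the borderline $\gamma\ge-\tfrac12$, the a.s.\ lower bound on $L^{(f)}_t$ from the SDE is unconditional (since $B(X_u)\ge bf(u)^\gamma$ when $\gamma<0$), and your case split at $\rho\le\tfrac12$ is actually forced, because the contradiction step needs $A(\infty)<\infty$ on $\{X_t\to\infty\}$, which only the DDS dichotomy supplies. You should still spell out the DDS step (the time-changed process is a reflecting Brownian motion on a \emph{random} time-dependent interval, so a conditional Borel--Cantelli argument is needed to get visits to $0$ i.o.), and the passage from ``$P(X_t\to\infty)=0$'' to ``recurrent'' rests on a $0$--$1$ law that deserves a sentence, but these are gaps of exposition, not of substance.

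Part (ii), however, has a genuine gap, and you have in fact named it yourself. You reduce transience to $\int_0^\infty s(f(u))\,dL^{(f)}_u<\infty$ a.s., and in turn to $E[L^{(1)}_\infty]<\infty$; the justification offered -- that the speed measure on $(1,f(u))$ has total mass $\asymp (B(f(u))s(f(u)))^{-1}$ and hence the expected local-time rate at $x=1$ is $O(B(f(u))s(f(u)))$ -- treats the process at time $u$ as if it were distributed according to the equilibrium measure of the current box. That is precisely what must be proved: one has to show that the transient process, which starts near the moving ceiling, does not spend appreciably more time near $x=1$ than equilibrium would predict, uniformly along the growth of $f$. In the regime $\rho>1$ this is believable, but it is not a consequence of anything you have written, and it is not obviously easier than the thing being proved. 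The paper sidesteps the equilibrium question entirely: it bounds $P_1(B_j)$ (the probability of reaching $1$ between $T_{f(j)}$ and $T_{f(j+1)}$) by a hitting probability from a level $f(j)-M_j$ plus a sharp upper bound on the probability of descending from $f(j)$ to $f(j)-M_j$ within one time unit (via Proposition 3 and, for $\gamma>0$, a product of exponential moments of IID unit-length crossings), then sums over $j$. If you want to keep the scale-function framework, you need an honest a priori estimate on $E[L^{(1)}_\infty]$ or on $\int_0^\infty P(X_u\le K)\,du$ that does not presuppose near-stationarity -- and I suspect obtaining it will require excursion or comparison estimates of essentially the same flavour as those in the paper.
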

\noindent\bf Remark.\rm\ We expect that the process is also recurrent in part (i)
if $\frac{2bc^{1+\gamma}}{1+\gamma}=1$ and $\gamma\in(-1,-\frac12)$.
\medskip

Using Theorem \ref{1}, we will  prove the following result for the multi-dimensional process.

\begin{theorem}\label{2}
Consider the diffusion process corresponding to the generator
 $\frac12\Delta+b(x)\nabla$ in the time-dependent domain $D(t)=f(t)\mathcal{K}$,
where $\mathcal{K}$ and $f$ are as in the first paragraph.
Let
$$
B^+(r)=\max_{|x|=r}b(x)\cdot\frac x{|x|},\  \ \
B^-(r)=\min_{|x|=r}b(x)\cdot\frac x{|x|},
$$
and let
$$
\text{rad}\thinspace^+(\mathcal{K})=\max(|x|:x\in\partial \mathcal{K}),\ \ \
\text{rad}\thinspace^-(\mathcal{K})=\min(|x|:x\in\partial \mathcal{K}).
$$
Let $\gamma>-1$ and $b,c>0$.

\noindent i. Assume that
\begin{equation}\label{recurcondition}
\begin{aligned}
&B^+(r)\le b r^\gamma,\ \text{ for sufficiently large} \ r,\\
&f(t)\le \frac c{\text{rad}\thinspace^+(\mathcal{K})}(\log t)^{\frac1{1+\gamma}},\ \text{ for sufficiently
  large}\  t.
\end{aligned}
\end{equation}
Also assume either that $\mathcal{K}$ is a ball or that
  $\int_0^\infty (f')^2(t)dt<\infty$.
\newline
  If
$$
\frac{2bc^{1+\gamma}}{1+\gamma}<1,\ \ \ \ \
\text{or}\ \ \
\frac{2bc^{1+\gamma}}{1+\gamma}=1, \ d=2\  \text{and}\ \gamma\ge0,
$$
then the process is recurrent.

\noindent ii. Assume that
\begin{equation}\label{transcondition}
\begin{aligned}
& B^-(r)\ge b r^\gamma,\ \text{ for sufficiently large}\ r,\\
& f(t)\ge \frac c{\text{rad}\thinspace^-(\mathcal{K})}(\log t)^{\frac1{1+\gamma}},\ \text{ for sufficiently
  large}\ t.
\end{aligned}
\end{equation}
 If
$$
\frac{2bc^{1+\gamma}}{1+\gamma}>1,
$$
then the process is transient.
\end{theorem}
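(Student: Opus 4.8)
The plan is to reduce both parts to Theorem \ref{1}, applied to the radial coordinate $R(t)=|X(t)|$, together with the reflection estimates of \cite{DHS}. Away from $\partial D_t$, Itô's formula gives $dR(t)=\big(\tfrac{d-1}{2R(t)}+b(X(t))\cdot\tfrac{X(t)}{R(t)}\big)\,dt+d\widetilde W(t)$, where $\widetilde W$ is a standard one-dimensional Brownian motion and, when $X(t)\in\partial D_t$, there is in addition a local-time term carrying the radial component of the inward reflection. The radial drift lies between $\tfrac{d-1}{2r}+B^-(r)$ and $\tfrac{d-1}{2r}+B^+(r)$, and since $\gamma>-1$ the Bessel term $\tfrac{d-1}{2r}$ has strictly smaller order than $r^\gamma$; hence under \eqref{recurcondition} the radial drift is at most $(b+\varepsilon)r^\gamma$ for large $r$ (any $\varepsilon>0$), and under \eqref{transcondition} it is at least $br^\gamma$ for large $r$. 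By the dichotomy recalled in the introduction, it suffices to decide whether $\liminf_{t\to\infty}R(t)<\infty$.

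For part (ii), put $g(t)=\mathrm{rad}^-(\mathcal K)f(t)$, so $B(0,g(t))\subseteq D_t$ and, by \eqref{transcondition}, $g(t)\ge c(\log t)^{1/(1+\gamma)}$ for large $t$. Fix a level $r_1$ beyond which the radial drift of $R$ exceeds $br^\gamma$, and let $Y$ be the one-dimensional diffusion on $(r_1,g(t))$, reflected at both endpoints, with drift $bx^\gamma$, driven by the same $\widetilde W$; rescaling its lower endpoint to $1$ leaves $\frac{2bc^{1+\gamma}}{1+\gamma}$ unchanged, so Theorem \ref{1}(ii) yields $Y(t)\to\infty$. As long as $R(t)<g(t)$, the point $X(t)$ lies in the interior ball $B(0,g(t))$, so $R$ carries no reflection term, and as long as moreover $R(t)\ge r_1$ its drift dominates that of $Y$; when instead $R(t)\ge g(t)$ the inequality $R\ge Y$ is automatic since $Y\le g(t)$. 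Since $d\ge2$, the origin is inaccessible for $R$, so $R$ reaches $r_1$ in finite time almost surely; coupling $Y$ from $r_1$ at that instant, a comparison for one-dimensional SDEs driven by a common Brownian motion gives $R(t)\ge Y(t)$ for all later $t$ with $R(t)\ge r_1$, and if $R$ ever returned to $r_1$ then so would $Y$ — an event of probability $0$. Hence $R(t)\to\infty$, the process is transient, and $\partial D_t$ never enters the argument, which is why part (ii) needs no hypothesis on $\mathcal K$ or on $\int(f')^2$.

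For part (i) one needs the reverse comparison $R(t)\le Y(t)$ with a recurrent one-dimensional $Y$, and here the moving boundary $\partial D_t$ cannot be avoided: for a non-spherical $\mathcal K$ the radial component of the reflection has no sign and $R$ is not itself a diffusion. This is exactly the difficulty treated in \cite{DHS} for the driftless case, the hypothesis that $\mathcal K$ be a ball or that $\int_0^\infty(f')^2\,dt<\infty$ being what makes their comparison work; I would import that estimate, checking — most cleanly via the Cameron--Martin--Girsanov transformation, which leaves the reflection structure intact — that the locally bounded drift $b$ does not disturb it. This produces $R(t)\le\widehat Y(t)$, with $\widehat Y$ a one-dimensional reflected diffusion on $(\mathrm{rad}^-(\mathcal K),g(t))$, where now $g(t)=\mathrm{rad}^+(\mathcal K)f(t)\le c(\log t)^{1/(1+\gamma)}$ for large $t$, and with drift $\tfrac{d-1}{2x}+bx^\gamma$. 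Rescaling the lower endpoint to $1$ — which replaces $(b,c)$ by $(b\alpha^{1+\gamma},c/\alpha)$, $\alpha=\mathrm{rad}^-(\mathcal K)$, hence leaves $\frac{2bc^{1+\gamma}}{1+\gamma}$ fixed — puts us in the range of Theorem \ref{1}(i). When $\frac{2bc^{1+\gamma}}{1+\gamma}<1$ the lower-order term $\tfrac{d-1}{2x}$ may be absorbed by raising $b$ to $b+\varepsilon$ while still keeping $\frac{2(b+\varepsilon)c^{1+\gamma}}{1+\gamma}<1$, so $\widehat Y$ is recurrent and $\liminf R(t)\le\liminf\widehat Y(t)<\infty$. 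In the borderline case $\frac{2bc^{1+\gamma}}{1+\gamma}=1$ that slack is unavailable, so $\tfrac{d-1}{2x}$ must be retained and treated as a genuine Bessel perturbation of the scale computation underlying Theorem \ref{1}; this is recurrence-preserving only when $d-1=1$, and then only for $\gamma\ge0$, which accounts for the extra restrictions in part (i).

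The main obstacle is the reflection analysis in part (i): unlike the transience argument, which stays strictly inside $D_t$ and reduces immediately to Theorem \ref{1}, the recurrence argument must control the non-radial reflection at the time-dependent boundary of a general domain. Transporting that control from \cite{DHS}, and confirming that a bounded drift perturbs neither the reflection comparison nor — in the critical case — the Bessel bookkeeping, is the only genuinely new point; the remaining ingredients (the radial Itô formula, the one-dimensional comparisons, the rescalings, and the appeals to Theorem \ref{1}) are routine.
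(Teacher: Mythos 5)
Your overall reduction to Theorem \ref{1} via the radial It\^o decomposition is sound, and for part (i) your argument is essentially the paper's: reduce to the ball case by comparing against the radial drift $B^+$, apply the one-dimensional result to $bx^\gamma+\tfrac{d-1}{2x}$ on $(1,\mathrm{rad}(\mathcal K)f(t))$, absorb the Bessel term when $\tfrac{2bc^{1+\gamma}}{1+\gamma}<1$, track it carefully in the borderline case (which is exactly how the paper gets the restriction $d=2,\ \gamma\ge0$), and import step V of the proof of Theorem 1.15 of \cite{DHS} for general $\mathcal K$.

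Your part (ii) is a genuinely different route. The paper proves the radial/ball case with Theorem \ref{1} and then invokes Lemma 2.1 of \cite{DHS} (a domain-monotonicity lemma) to pass to general $\mathcal K$. You instead perform a direct pathwise coupling between $R=|X|$ and a one-dimensional reflected diffusion $Y$ with time-dependent upper barrier $g(t)=\mathrm{rad}^-(\mathcal K)f(t)$, driven by the radial Brownian motion $\widetilde W$ from It\^o's formula. The key observation -- that when $R<g(t)$ the point $X$ lies in the inscribed ball and hence strictly inside $D_t$, so $R$ carries no local-time term, and when $R\ge g(t)$ the comparison $R\ge Y$ is automatic -- is correct and cleanly sidesteps the non-radial reflection that makes the general $\mathcal K$ case nontrivial. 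What this buys you is a self-contained argument that does not black-box the DHS lemma; what it costs you is having to verify the pathwise comparison yourself (a Tanaka/Gronwall argument, using that $Y-R$ has bounded variation because both have unit diffusion coefficient, and that on $\{Y>R\}$ one has $R<g(t)$ so $d\mathcal L=0$, while the Skorokhod term of $Y$ at $g(t)$ only decreases $(Y-R)^+$).

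One genuine slip in your part (ii): the claim that $\{R$ ever returns to $r_1\}\subseteq\{Y$ returns to $r_1\}$ is \emph{``an event of probability $0$''} is wrong. $Y$ is reflected at $r_1$ and, although transient (so $Y(t)\to\infty$), it returns to $r_1$ with positive probability -- just finitely often. The fix is standard: let $\sigma_n$ be the successive return times of $R$ to $r_1$; up to $\sigma_1$ the comparison gives $R\ge Y$, and at $\sigma_1$ one then has $Y(\sigma_1)\le R(\sigma_1)=r_1$, hence $Y(\sigma_1)=r_1$; iterating, $\{\sigma_n<\infty\ \forall n\}\subseteq\{Y$ hits $r_1$ i.o.$\}$, which has probability zero by transience of $Y$. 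Applying the same argument with $r_1$ replaced by any level $K$ gives $R(t)\to\infty$ a.s. With this correction your part (ii) is a valid and more elementary substitute for the paper's appeal to Lemma 2.1 of \cite{DHS}.
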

\noindent \bf Remark 1.\rm\ We expect that the process is  recurrent in part (i)
when $\frac{2bc^{1+\gamma}}{1+\gamma}=1$, for all values of $\gamma>-1$ and $d\ge2$.
\medskip

\noindent \bf Remark 2.\rm\ If $f(t)=C(\log t)^{\frac1{1+\gamma}}$, for all large $t$,
where   $C>0$ and $\gamma>-1$, then the condition
$\int_0^\infty (f')^2(t)dt<\infty$ in part (i) is satisfied.

\medskip

In the recurrent case, it is natural to consider
 \it positive recurrence\rm, which we define as follows:
the one-dimensional process above is  positive recurrent
if starting from $x>1$, the expected value of the first hitting time of 1 is finite, while the multi-dimensional
 process defined above is positive recurrent if starting from a point  $x\not\in\bar{\mathcal{K}}$,
the expected value of the first hitting time of $\bar{\mathcal{K}}$ is finite. It is simple to see that
this definition is independent of the starting point and the starting time of the process.
We have the following theorem regarding positive recurrence of the one-dimensional process.

\begin{theorem}\label{3}
Under the conditions of part (i) of  Theorem \ref{1} or Theorem \ref{2}, the process is positive recurrent if
$$
\frac{2bc^{1+\gamma}}{1+\gamma}<1.
$$
\end{theorem}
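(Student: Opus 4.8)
The plan is to prove the one-dimensional assertion---that $E_x[\tau_1]<\infty$ for the process of Theorem~\ref{1} under the hypotheses of part~(i) with $\frac{2bc^{1+\gamma}}{1+\gamma}<1$---and then to obtain the multi-dimensional assertion from it by the same radial comparison that reduces Theorem~\ref{2} to Theorem~\ref{1} (together with the results of \cite{DHS}). In that reduction the radial part of the multi-dimensional process is dominated from above by a one-dimensional process whose drift is $B^+(r)+\frac{d-1}{2r}$, and hitting $\bar{\mathcal K}$ is dominated by the one-dimensional process reaching $\mathrm{rad}^-(\mathcal K)$; since $\frac{d-1}{2r}$ is of strictly lower order than $br^\gamma$, the strict inequality lets one absorb it into a slightly larger constant $b$, so that the one-dimensional bound applies. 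Accordingly I concentrate on the one-dimensional case. Fix $x_0\in(1,f(0)]$; since positive recurrence does not depend on the starting data, it suffices to bound $E_{x_0,0}[\tau_1]$.

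First I would establish a ``frozen-boundary'' estimate. For $\rho>1$ let $V(\cdot,\rho)$ solve $\frac12V''+BV'=-1$ on $(1,\rho)$ with $V(1,\rho)=0$ and $V_x(\rho,\rho)=0$; explicitly $V(x,\rho)=2\int_1^x\!\int_{x'}^{\rho}e^{2\int_{x'}^{z}B(u)\,du}\,dz\,dx'$, which is nonnegative and nondecreasing in both $x$ and $\rho$ (it is the expected time for the time-homogeneous reflected diffusion on $(1,\rho)$ to reach $1$). Fix a terminal time $t$ and set $g(x,u):=V(x,f(t))$ on $\{1\le x\le f(u),\ 0\le u\le t\}$. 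Because $f$ is nondecreasing, $f(u)\le f(t)$ there, so $g_x(f(u),u)=V_x(f(u),f(t))\ge0$ (the reflection at the moving endpoint cannot increase $g$), while $(\partial_u+\tfrac12\partial_x^2+B\partial_x)g=-1$ in the interior. Hence $u\mapsto g(X(u\wedge\tau_1\wedge t),u\wedge\tau_1\wedge t)+u\wedge\tau_1\wedge t$ has nonpositive drift and nonpositive reflection contribution, is bounded below, and is therefore a supermartingale; optional stopping gives, for all $0\le s\le t$ and $x\in[1,f(s)]$,
\[
E_{x,s}\big[\min(\tau_1,t)-s\big]\ \le\ V(x,f(t))\ \le\ V(f(t),f(t))\ =:\ m(t).
\]
Since $B(u)\le bu^\gamma$ for large $u$ one gets $\int_{x'}^{z}B\le A_0+\frac{b}{1+\gamma}z^{1+\gamma}$ for all $1\le x'\le z$ with a fixed $A_0$, so $m(t)\le 2e^{2A_0}f(t)^2e^{\frac{2b}{1+\gamma}f(t)^{1+\gamma}}$; combined with $f(t)\le c(\log t)^{1/(1+\gamma)}$ for large $t$,
\[
m(t)\ \le\ C_\ast\,(\log t)^{2/(1+\gamma)}\,t^{\alpha},\qquad \alpha:=\tfrac{2bc^{1+\gamma}}{1+\gamma}<1,\qquad t\ge T_0 .
\]

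Next I would convert this into an integrable tail bound for $\tau_1$. Markov's inequality turns the displayed estimate into $P_{x,s}(\tau_1>t)\le m(t)/(t-s)$ for $x\in[1,f(s)]$, $s<t$; one application of this is insufficient, since $m(t)/t=O\big(t^{\alpha-1}(\log t)^{2/(1+\gamma)}\big)$ and $\alpha-1\in(-1,0)$, so the bound is not summable. The remedy is to iterate along a geometric grid. Fix $n\in\mathbb N$ with $(n+1)(1-\alpha)\ge4$, put $t_i:=t^{i/n}$ for $i=0,\dots,n$, and apply the Markov property at $t_{n-1},t_{n-2},\dots,t_1$: since $\{\tau_1>t_i\}$ decreases in $i$ and $X(t_{i-1})\in[1,f(t_{i-1})]$ on $\{\tau_1>t_{i-1}\}$,
\[
P_{x_0,0}(\tau_1>t)\ \le\ \prod_{i=1}^{n}\ \sup_{x\in[1,f(t_{i-1})]}P_{x,t_{i-1}}(\tau_1>t_i)\ \le\ \prod_{i=1}^{n}\frac{m(t_i)}{t_i-t_{i-1}} .
\]
For $t$ large one has $t_i-t_{i-1}\ge t_i/2$ and $t_i\ge T_0$, so the $i$-th factor is at most $2C_\ast(\log t)^{2/(1+\gamma)}t_i^{\alpha-1}$; since $\sum_{i=1}^{n}(i/n)=(n+1)/2$, the product is at most $(2C_\ast)^{n}(\log t)^{2n/(1+\gamma)}\,t^{(\alpha-1)(n+1)/2}\le(2C_\ast)^{n}(\log t)^{2n/(1+\gamma)}\,t^{-2}$, hence $O(t^{-3/2})$ for large $t$. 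Therefore $E_{x_0,0}[\tau_1]=\int_0^\infty P_{x_0,0}(\tau_1>t)\,dt<\infty$, which is positive recurrence in one dimension; the multi-dimensional statement follows as indicated in the first paragraph.

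The crux is this last step. The frozen-boundary bound decays only like the negative power $t^{\alpha-1}\in(t^{-1},1)$ of $t$---as it must, being a first-moment estimate for a recurrent process whose domain has already grown to size $\approx(\log t)^{1/(1+\gamma)}$---and so it has to be bootstrapped. Chaining the Markov property over $n$ intermediate times gains a factor $t^{(\alpha-1)/n}$ at each step, hence an overall extra power $t^{(\alpha-1)(n+1)/2}$, which can be driven below $t^{-1}$ for $n$ large precisely because $\alpha<1$. This is exactly where the hypothesis $\frac{2bc^{1+\gamma}}{1+\gamma}<1$ enters, and it is why the argument fails at the critical value $\alpha=1$, consistent with the theorem claiming only recurrence there.
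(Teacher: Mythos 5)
Your proof is correct, and it takes a genuinely different route from the paper's. The paper proves Theorem~3 by recycling the Borel--Cantelli/excursion machinery from the proof of Theorem~1(i): with $t_j=e^j$ it bounds $E_2T_1\le e+\sum_j e^{j+1}P_2(T_1\ge t_j)$, expresses $P_2(T_1\ge t_j)$ as a product over the events $A_{i+1}^c$, and then refines the choice of $l_j$ (relative to the recurrence proof) so that $P_{f(t_j)}^{bx^\gamma;\text{Ref}\leftarrow:f(t_{j+1})}(T_1\le t_{j+1}-t_j)\to1$, relying on Propositions~1 and~2 and the $\sigma^{(j)}_{l_j}$ decomposition. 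You instead build a Lyapunov function from scratch: $V(\cdot,\rho)$ is exactly the expected hitting time of $1$ for the frozen-boundary reflected diffusion on $(1,\rho)$, and the observation that monotonicity of $f$ gives $V_x\big(f(u),f(t)\big)\ge0$ for $u\le t$ makes the It\^o supermartingale argument go through for the moving boundary. This yields the truncated moment bound $E_{x,s}[\min(\tau_1,t)-s]\le m(t)\lesssim(\log t)^{2/(1+\gamma)}t^\alpha$ with $\alpha=\tfrac{2bc^{1+\gamma}}{1+\gamma}<1$, which is too weak on its own, and your chaining along $t_i=t^{i/n}$ with $n$ large is precisely the right bootstrap to convert it into $P(\tau_1>t)=O(t^{-3/2})$. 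Your argument is more self-contained than the paper's (it bypasses Propositions~1--3 entirely), makes structurally transparent exactly where the strict inequality $\alpha<1$ is used, and consumes less of the apparatus already set up for Theorem~1; the paper's proof is shorter as written only because it leans on work already done. Your remark that the strict inequality lets the Bessel term $\tfrac{d-1}{2r}$ be absorbed into a slightly larger $b$ is a correct way to handle the radial reduction, and your treatment of the multi-dimensional case is at the same (sketchy) level of detail as the paper's own.
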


\bf\noindent Remark.\rm\ The proof of Theorem \ref{3} relies heavily on the estimates
in the proof of part (i) of Theorem \ref{1}. We suspect that in
the borderline cases, when $\frac{2bc^{1+\gamma}}{1+\gamma}=1$, the process
is never positive recurrent. However, the estimates in the proof of part (ii) of Theorem \ref{1}
don't go quite far enough to prove this.
\medskip

In the transient case, it is natural to consider the asymptotic growth rate of the process.
 It is known that
the process $X(t)$ corresponding to the generator
  $\frac12\frac{d^2}{dx^2}+bx^\gamma\frac d{dx}$ on $[1,\infty)$ with reflection at 1
  grows a.s. on the order
  $t^{\frac1{1-\gamma}}$ if  $\gamma\in(-1,1)$.
  (In fact, the solutions $\hat x(t)$ to the differential equation $x'=bx^\gamma$
satisfy $\lim_{t\to\infty}\frac{\hat x(t)}{t^\frac1{1-\gamma}}=(b(1-\gamma))^\frac1{1-\gamma}$, and it is not hard
to show that $X(t)$ satisfies $\lim_{t\to\infty}\frac{X(t)}{t^{\frac1{1-\gamma}}}=(b(1-\gamma)^\frac1{1-\gamma}$ a.s.)
  The process  grows a.s.
  exponentially if $\gamma=1$, and explodes  a.s. if $\gamma>1$ \cite{P87}.
  From this it is clear that the one-dimensional process $X(t)$
  with $B(x)=bx^\gamma$ on the time-dependent domain $(1,f(t))$ satisfies
  \begin{equation*}
  X(t)=f(t)\ \text{for arbitrarily large}\ t \ \text{a.s.},
  \end{equation*}
and consequently,
  \begin{equation}\label{limsup}
  \limsup_{t\to\infty}\frac{X(t)}{f(t)}=1\ \text{a.s.},
  \end{equation}
   if $f(t)=o(t^{\frac1{1-\gamma}})$
  and $\gamma\in(-1,1)$, if $f(t)$ grows sub-exponentially  and $\gamma=1$,
  and with no restrictions on  $f$ if $\gamma>1$.
The next theorem treats the  behavior of $\liminf_{t\to\infty}\frac{X(t)}{f(t)}$ in what turns out to be the delicate case that
$B(x)=bx^\gamma$ and
$f(t)=c(\log t)^{\frac1{1+\gamma}}$, with $\frac{2bc^{1+\gamma}}{1+\gamma}>1$.
(Recall from Theorem \ref{1} that if $\frac{2bc^{1+\gamma}}{1+\gamma}<1$, then the process is recurrent
and thus $\liminf_{t\to\infty}X(t)=1$.)
We restrict to $\gamma\in(-1,1)$ for technical reasons, but we suspect that the following result also holds
for $\gamma\ge1$.
\begin{theorem}\label{liminf}
Consider the diffusion process corresponding to the generator
  $\frac12\frac{d^2}{dx^2}+B(x)\frac d{dx}$ in the time-dependent domain $(1,f(t))$, with reflection at
both the fixed endpoint and the time-dependent one.
Let $\gamma\in(-1,1)$ and $b,c>0$. Assume that for sufficiently large $x,t$,
$$
\begin{aligned}
&B(x)=bx^\gamma,\\
&f(t)=c(\log t)^{\frac1{1+\gamma}},
\end{aligned}
$$
where
$$
\frac{2bc^{1+\gamma}}{1+\gamma}>1.
$$
Then
$$
\liminf_{t\to\infty}\frac{X(t)}{f(t)}=\big(1-\frac{1+\gamma}{2bc^{1+\gamma}}\big)^\frac1{1+\gamma}\ \text{a.s.}
$$
\end{theorem}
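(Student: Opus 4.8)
The plan is to transform the time-dependent problem into a time-homogeneous one by rescaling space, then to analyze the resulting one-dimensional diffusion on a fixed interval via scale functions and the strong Markov property, much as in the proof of Theorem \ref{1}. Concretely, I would introduce the change of variables $Y(t)=\frac{X(t)}{f(t)}$, which lives in the fixed interval $(\frac{1}{f(t)},1)\to(0,1)$ (asymptotically), and also change the time scale so as to absorb the growth of $f$; after an Itô computation the rescaled process satisfies an SDE whose drift, in the relevant regime $X\sim f(t)$, behaves like $b x^\gamma - \frac{f'(t)}{f(t)} x$. Using $f(t)=c(\log t)^{1/(1+\gamma)}$ one has $\frac{f'}{f}\sim\frac{1}{(1+\gamma)\,t\log t}$, so in a new time variable $s$ chosen to make the diffusion coefficient order one near the moving boundary, the ratio of the ``inward'' drift $b x^\gamma$ to the compression term becomes exactly governed by the constant $\frac{2bc^{1+\gamma}}{1+\gamma}$. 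This is why that constant, and the specific growth rate $(\log t)^{1/(1+\gamma)}$, are the critical ones.

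Next I would compute the natural scale function of the limiting one-dimensional diffusion obtained near the boundary and read off from it the value $\alpha:=\big(1-\frac{1+\gamma}{2bc^{1+\gamma}}\big)^{1/(1+\gamma)}$. The heuristic: for a diffusion $dZ = \sigma\,dW + \mu(Z)\,ds$ with an appropriate $\mu$, the scale function $S(z)=\int^z e^{-2\int^u \mu/\sigma^2}$ determines, for $p<q<r$, the probability of reaching $r$ before $p$ as $\frac{S(q)-S(p)}{S(r)-S(p)}$; to get a nondegenerate $\liminf$ I need the level $\alpha$ to be exactly the threshold separating levels from which, under reflection at the moving boundary $1$, the process returns to a neighborhood of $\alpha$ infinitely often but drops below $\alpha-\epsilon$ only finitely often (upper bound $\liminf\le\alpha$) and returns above $\alpha-\epsilon$ i.o. while it cannot be pinned below (lower bound $\liminf\ge\alpha$). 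The upper bound $\limsup\frac{X(t)}{f(t)}=1$ is already given by \eqref{limsup} and the surrounding discussion since $f(t)=c(\log t)^{1/(1+\gamma)}=o(t^{1/(1-\gamma)})$ for $\gamma\in(-1,1)$, so the work is entirely in the $\liminf$.

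For the two inequalities I would run a Borel--Cantelli argument along a geometric (in $s$, equivalently doubly-exponential in $t$) sequence of time windows. For $\liminf_{t\to\infty}\frac{X(t)}{f(t)}\le\alpha$: fix $\epsilon>0$ and show that in each window the rescaled process, started anywhere in $[\alpha-\epsilon,1)$, has probability bounded below (uniformly in the window, because after time-change the coefficients are asymptotically autonomous) of dipping below $\alpha-\epsilon/2$; independence across well-separated windows via the Markov property then forces this to happen i.o. For $\liminf\ge\alpha$, i.e. $\frac{X(t)}{f(t)}\ge\alpha-\epsilon$ eventually: show the scale-function estimate makes the probability of dropping from level $\alpha-\epsilon/2$ down to level $\alpha-\epsilon$ within a window summably small, again using the asymptotic autonomy, and apply Borel--Cantelli; this is exactly where the hypothesis $\frac{2bc^{1+\gamma}}{1+\gamma}>1$ (hence $\alpha>0$ and, crucially, the scale function having the right integrability/divergence at the relevant endpoint) is used, and it parallels the transience estimate in part (ii) of Theorem \ref{1}. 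The main obstacle I expect is \emph{uniformity of the window estimates}: the coefficients are only asymptotically autonomous, so I must control the error terms ($B(x)-bx^\gamma$ is only ``$=bx^\gamma$ for large $x$,'' and the time-change introduces lower-order corrections to the drift) uniformly over a whole window and show they do not perturb the critical constant; carefully choosing the window lengths growing slowly enough that the coefficient drift within a window is negligible, but fast enough that Borel--Cantelli converges/diverges on the correct side, is the delicate balancing act, and the restriction $\gamma\in(-1,1)$ enters here to keep $f(t)=o(t^{1/(1-\gamma)})$ and to keep the time-change well-behaved.
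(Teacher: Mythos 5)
The high-level architecture---prove $\liminf\le\alpha$ and $\liminf\ge\alpha$ separately via Borel--Cantelli along a time sequence, with $\alpha$ emerging from a scale-function computation, and read $\limsup=1$ off from \eqref{limsup}---matches the paper. But two of your intermediate steps do not work as written, and they obscure the one genuinely new device the paper introduces.

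First, the rescaling $Y(t)=X(t)/f(t)$, followed by the time change that normalizes the diffusion coefficient, does \emph{not} produce an asymptotically autonomous diffusion, and the critical constant is not a drift balance. With $ds=dt/f^2(t)$ the drift of $Y$ in the new time is $bf^{1+\gamma}(t)y^\gamma-f(t)f'(t)y$; for $f(t)=c(\log t)^{1/(1+\gamma)}$ the first term equals $bc^{1+\gamma}(\log t)\,y^\gamma$, which grows without bound, while $ff'\to0$. Their ratio tends to infinity and is never $\tfrac{2bc^{1+\gamma}}{1+\gamma}$. What actually produces that constant, and the value of $\alpha$, is the evaluation of the scale function $\phi(x)\sim\tfrac{1}{2b}x^{-\gamma}\exp(-\tfrac{2bx^{1+\gamma}}{1+\gamma})$ at the current boundary height: along $t_j=e^j$ one has $f(t_j)=cj^{1/(1+\gamma)}$, hence $\phi(f(t_j))/\phi(\rho f(t_j))\sim\rho^\gamma e^{-\frac{2bc^{1+\gamma}}{1+\gamma}(1-\rho^{1+\gamma})j}$, which is traded against $\sim e^j$ essentially independent excursion attempts per window. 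So the correct time scale is $t_j=e^j$ (geometric in $t$), not doubly exponential, and the comparison is between a hitting probability and a number of attempts, not between two drift terms. (There is also a level mix-up: for $\liminf\le\alpha$ you need dips below $\rho f(t)$ with $\rho>\alpha$, not below $\alpha-\epsilon/2$; with $\rho<\alpha$ the conditional probabilities are \emph{not} bounded below, they are summable.)

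Second, and this is the substantive gap, the harder inequality $\liminf X(t)/f(t)\ge\alpha$ cannot be run on fixed windows $[t_j,t_{j+1}]$ by making ``the probability of dropping from $\alpha-\epsilon/2$ to $\alpha-\epsilon$ within a window'' summable, because at time $t_j$ you have no control on $X(t_j)$: it could be anywhere in $[1,f(t_j)]$, and the drop probability from a low starting point is not small. The paper resolves this with \emph{random} windows. It introduces stopping times $\tau_1<\tau_2<\cdots$ at which $X$ touches the moving boundary $f(t)$, separated by at least one unit of time, and events $B_j$ requiring (after an initial Brownian-fluctuation bound near $\tau_j$) that starting from the known point $f(\tau_j)$ the process stays above $\rho(1-\epsilon)f(\tau_j)$ until $\tau_{j+1}$, and that $\tau_{j+1}\le s(\tau_j)$ where $s$ is defined by $\rho(1-2\epsilon)f(s)=\rho(1-\epsilon)f(t)$; this last condition is exactly what converts ``stay above a fixed fraction of $f(\tau_j)$'' into ``stay above $(1-2\epsilon)\rho f(t)$ for all $t\in[\tau_j,\tau_{j+1}]$.'' Your sketch contains no analogue of the $\tau_j$ or the $s(\cdot)$ map, and without control of the starting point in each window the Borel--Cantelli estimate does not even begin. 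This is precisely the ``new ingredient'' the paper flags after \eqref{upperbd}, and it is the part your proposal misses.
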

\medskip

 We now consider the asymptotic growth behavior
in the case that  $B(x)=x^\gamma$, $\gamma\in(-1,1)$, and that $f(t)$ is on a larger order than $(\log t)^{\frac1{1+\gamma}}$, but
 on a smaller order than $t^{\frac1{1-\gamma}}$. (Recall from the paragraph preceding Theorem \ref{liminf}
  that this latter order is the order
on which the process would grow if it lived on $[1,\infty)$ rather than on the
time-dependent domain.)  For simplicity we will assume that
$f(t)=(\log t)^l$, with $l>\frac1{1+\gamma}$, or that $f(t)=t^l$, with
$l\in(0,\frac1{1-\gamma})$. (We have dispensed  with the coefficients $b$ and $c$ because here they no longer play
a role at the level of  asymptotic behavior we investigate.)
\begin{theorem}\label{liminfagain}
Consider the diffusion process corresponding to the generator
  $\frac12\frac{d^2}{dx^2}+B(x)\frac d{dx}$ in the time-dependent domain $(1,f(t))$, with reflection at
both the fixed endpoint and the time-dependent one.
Let $\gamma\in(-1,1)$. Assume that
$$
B(x)=x^\gamma.\\
$$

\noindent i.
 Assume  that for $t\ge2$,
$$
f(t)=(\log t)^l, \ \text{with}\ l>\frac1{1+\gamma}.
$$
Then
$$
\lim_{t\to\infty}\frac{X(t)}{f(t)}=1\ \text{a.s.}
$$

\noindent ii. Assume  that
$$
f(t)=t^l, \ \text{with}\ l\in(0,\frac1{1-\gamma}).
$$
Let
$$
q_0=\begin{cases} 0,\ \text{if}\ \gamma\ge0;\\ -l\gamma,\ \text{if}\ \gamma\in(-1,0).\end{cases}
$$
Then
\begin{equation}\label{q0role}
\limsup_{t\to\infty}\frac{f(t)-X(t)}{t^q}=0\ \text{a.s.}\ \text{for }\ q>q_0,
\end{equation}
and
\begin{equation}\label{q0roleagain}
\limsup_{t\to\infty}\frac{f(t)-X(t)}{t^{q_0}}=\infty\ \text{a.s.}, \ \text{when} \ \gamma\in(-1,0].
\end{equation}
In particular  (in light of \eqref{limsup}),
$$
\lim_{t\to\infty}\frac{X(t)}{f(t)}=1\ \text{a.s.}
$$

\end{theorem}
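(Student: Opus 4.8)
\medskip
\noindent\emph{Proof plan.}
The plan is to reduce everything to an \emph{upper} bound on the lag $Z(t):=f(t)-X(t)\ge0$ of the process behind the moving boundary, and then to estimate $Z$ by sandwiching it between reflected Brownian motions with constant drift. Two reductions come first. In both parts the hypotheses force transience: taking $b=1$ and $c$ large we have $B(x)=x^\gamma\ge bx^\gamma$ and $f(t)\ge c(\log t)^{1/(1+\gamma)}$ eventually, while $\frac{2bc^{1+\gamma}}{1+\gamma}>1$, so Theorem~\ref{1}(ii) gives $X(t)\to\infty$ a.s.; in particular the reflection at $x=1$ plays no role for large $t$. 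Moreover, since $B(x)=x^\gamma$ with $\gamma\in(-1,1)$ and $f(t)=o(t^{1/(1-\gamma)})$, the discussion preceding Theorem~\ref{liminf} applies, so $X(t)=f(t)$ for arbitrarily large $t$ a.s. and hence $\limsup_{t\to\infty}X(t)/f(t)=1$ a.s.; it therefore suffices to bound $Z$ from above. Concretely, for part~(i) it remains to show $Z(t)=o(f(t))$ a.s., and for part~(ii) it remains to prove \eqref{q0role} and \eqref{q0roleagain}, the final assertion $X(t)/f(t)\to1$ then following from \eqref{q0role} by choosing $q\in(q_0,l)$, which is possible since $q_0=0<l$ when $\gamma\ge0$ and $q_0=-l\gamma<l$ when $\gamma\in(-1,0)$.

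The comparison engine is this. Away from the boundary $dX=dW+X(t)^\gamma\,dt$, so $dZ=(f'(t)-X(t)^\gamma)\,dt-dW(t)+d\mathcal L(t)$, where $\mathcal L$ is the local time of $X$ at the moving boundary; thus $Z$ is a reflected-at-$0$ process. On the event $\{Z(t)\le\tfrac12 f(t)\}$ one has $X(t)\in[\tfrac12 f(t),f(t)]$, so $X(t)^\gamma$ lies between fixed multiples of $f(t)^\gamma$, and crucially $f'(t)=o(f(t)^\gamma)$ in both parts (immediate for $f(t)=(\log t)^l$, and equivalent to $l(1-\gamma)<1$ for $f(t)=t^l$). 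Hence, as long as $Z\le\tfrac12 f$, the drift of $Z$ is between two eventually-negative quantities of order $-f(t)^\gamma$. Over a dyadic window $[2^n,2^{n+1}]$ these rates are comparable to a constant $\mu_n\asymp f(2^n)^\gamma$, so by monotonicity of the Skorokhod reflection map (coupling with the same $W$) one can sandwich $\underline Z_n\le Z\le\overline Z_n$ on $[2^n,2^{n+1}]$, up to the first exit of $\{Z\le\tfrac12 f\}$, where $\overline Z_n,\underline Z_n$ are reflected Brownian motions with constant negative drifts of order $-f(2^n)^\gamma$. One must also check that the upper boundary barely moves over a window: the increase of $f$ during any single excursion is negligible compared to the excursion depths being tracked, again because $f'=o(f^\gamma)$, so freezing the boundary in these comparisons is legitimate.

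The rest is Borel--Cantelli. For the upper bound use the classical estimate $P(\max_{[0,T]}\overline Z_n\ge h)\le C(1+\mu_n^2T)e^{-c\mu_n h}$ for reflected Brownian motion with drift $\asymp-\mu_n$, with $T=2^n$ and $h=h_n\asymp n/\mu_n\asymp n\,f(2^n)^{-\gamma}$, chosen so the bound is summable in $n$; after a routine regeneration argument at the window endpoints, Borel--Cantelli gives $Z(t)\le h_n$ for all $t\in[2^n,2^{n+1}]$, eventually. For $f(t)=(\log t)^l$ this reads $Z(t)=O((\log t)^{1-l\gamma})=o((\log t)^l)=o(f(t))$, since $l>\tfrac1{1+\gamma}$ forces $1-l\gamma<l$; this is part~(i). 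For $f(t)=t^l$ it gives $Z(t)=O(t^{-l\gamma}\log t)$ when $\gamma<0$ and $Z(t)=O(\log t)$ when $\gamma\ge0$, in either case $o(t^q)$ for every $q>q_0$, which is \eqref{q0role}. For the matching lower bound \eqref{q0roleagain} (case $\gamma\in(-1,0]$) run the reverse direction: the dominated process $\underline Z_n$, a reflected Brownian motion with drift $\asymp-\mu_n$ over a window of length $2^n$, satisfies $P(\max\underline Z_n\ge a_n)\ge\varepsilon>0$ once $a_n\asymp n/\mu_n\asymp n\,f(2^n)^{-\gamma}$ is chosen so the expected number of its excursions reaching level $a_n$ in the window is $\gtrsim1$; since $a_n$ exceeds $2^{nq_0}=f(2^n)^{-\gamma}$ by a factor of order $n$, and the window events have conditional probability bounded below given the past (by the same sandwich), the conditional Borel--Cantelli lemma yields $Z(t)\ge a_n$ for infinitely many windows, hence $\limsup_{t\to\infty}(f(t)-X(t))/t^{q_0}=\infty$ a.s.

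The main obstacle is making the sandwich rigorous in the time-dependent setting: one must simultaneously control the nonconstant drift $x^\gamma$, the slow motion of the reflecting boundary during excursions, and the possibility --- which must be excluded a posteriori from the very bounds being proved --- that $Z$ leaves the regime $\{Z\le\tfrac12 f\}$; this is precisely where the quantitative hitting and excursion estimates developed in the proofs of Theorems~\ref{1} and~\ref{liminf} are needed. A secondary point is supplying the converse Borel--Cantelli input for \eqref{q0roleagain}, i.e.\ enough regeneration of the reflected process near the boundary at the window endpoints to make the window events conditionally non-degenerate.
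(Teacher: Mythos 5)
Your plan takes a genuinely different route from the paper's proof. You recast the problem entirely in terms of the lag $Z(t)=f(t)-X(t)$, freeze the (essentially constant) negative drift of $Z$ over dyadic windows $[2^n,2^{n+1}]$, and sandwich $Z$ between reflected Brownian motions with constant drift via monotonicity of the Skorokhod map. The paper instead proves \eqref{q0role} (and part (i)) by working along the stopping times $\tau_j$ at which $X(\tau_j)=f(\tau_j)$, i.e.\ at which $Z(\tau_j)=0$; over each inter-arrival interval $[\tau_j,\tau_{j+1}]$ it bounds the depth of the excursion of $X$ below $f$ using the $\phi$-potential (for the hitting ratio) together with a Brownian-scaling tail bound for the length of the excursion, and closes with $P_1(\cap_{j\ge M}B_j)\to1$. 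For the lower bound \eqref{q0roleagain} the paper conditions on deterministic times $t_j=j^k$ and applies conditional Borel--Cantelli, with the uniform lower bound on conditional probability coming from the $\phi$-potential (via \eqref{twosided}--\eqref{liminftwosided}) plus the Feynman--Kac moment-generating-function bound of Proposition~\ref{4}; your version of this is the excursion-count heuristic for the dominated reflected BM.

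The difference is not cosmetic, and the trade-offs are real. Your dyadic-window scheme avoids the explicit Feynman--Kac solutions of Propositions~\ref{2}--\ref{4} and the $\phi$-asymptotics, but it buys this by deferring exactly the hard points: (a) the window decomposition does not give you a clean restart, so the ``routine regeneration argument at the window endpoints'' is in fact the entire content of the estimate --- you must inductively control $Z(2^n)$ from the previous window, and the sandwich requires this starting value, whereas the paper's decomposition at $\{\tau_j\}$ restarts automatically with $Z(\tau_j)=0$; (b) the conditioning on $\{Z\le\tfrac12 f\}$ is circular in exactly the way you flag, and resolving it requires a separate stopping-time bootstrap showing that the exit event has summable probability over windows --- the paper never enters this difficulty because its comparison diffusions live on fixed intervals where such conditioning is unnecessary; (c) for the lower bound, the ``window events have conditional probability bounded below given the past'' is asserted, but producing that uniformity is precisely what the paper's Proposition~\ref{4} and the $\phi$-potential identity \eqref{twosided} supply; the analogous input in your scheme (a quantitative regeneration-to-near-stationarity estimate for the reflected comparison process, uniformly over the conditioning) is not routine and is not supplied. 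In short: the plan is coherent and could likely be carried out, but as written it names rather than closes the gaps, and those gaps are exactly where the paper's explicit eigenvalue/MGF machinery does its work.
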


\noindent \bf Remark.\rm\ Note in particular that for $b(x)=x^\gamma$ and $f(t)=t^l$,
if $\gamma\in[0,1)$, then the deviation of $X(t)$  from $f(t)$ as $t\to\infty$ is $o(t^q)$, for any $q> 0$, while if
$\gamma\in(-1,0)$, then this deviation is $o(t^q)$ for $q>-l\gamma$, but not for $q=-l\gamma$.

\medskip

Asymptotic growth behavior in the spirit of Theorems \ref{liminf} and \ref{liminfagain}
for the multi-dimensional case can be gleaned just as Theorem \ref{2} was gleaned from Theorem \ref{1}.

In section 2 we prove several auxiliary
results which will be needed for the proofs. The proofs of Theorem \ref{1}-\ref{liminfagain} are given in sections 3-7 respectively.

Throughout the paper, the following notation will be employed:

\noindent Let $X(t)$ denote a canonical, continuous real-valued path, and let
$T_\alpha=\inf\{t\ge0:X(t)=\alpha\}$.
Let
$$
L_{bx^\gamma}=\frac12\frac{d^2}{dx^2}+bx^\gamma\frac d{dx}.
$$
Let $P_x^{bx^\gamma;\text{Ref}\leftarrow:\beta}$ and
$E_x^{bx^\gamma;\text{Ref}\leftarrow:\beta}$  denote probabilities and expectations for the diffusion process
corresponding to $L_{bx^\gamma}$ on $[1,\beta]$, starting from $x\in[1,\beta]$, with reflection at $\beta$ and stopped at 1,
and let $P_x^{bx^\gamma;\text{Ref}\rightarrow:\alpha}$ and $E_x^{bx^\gamma;\text{Ref}\rightarrow:\alpha}$
 denote probabilities and expectations for the diffusion process
corresponding to $L_{bx^\gamma}$  on $[\alpha,\infty)$, starting from $x\in[\alpha,\infty)$, with reflection at $\alpha$.
We note that this latter diffusion is explosive if $\gamma>1$, but we will only be considering it until
time $T_\beta$ for some $\beta>\alpha$.
We will sometimes work with a constant drift, which we will denote by  $D$ (instead of $bx^\gamma$ with $\gamma=0$),
in which case $D$ will replace $bx^\gamma$ in all of the above notation.

\section{Auxiliary  Results}

In this section we prove four propositions. The first three of them are used explicitly
in the proof of Theorem \ref{1}, and implicitly in many of the other theorems, since
many of the calculations in the proof of Theorem \ref{1} are used in the proofs of the other theorems.
Proposition \ref{4} is  used only for the proof of \eqref{q0roleagain} in Theorem \ref{liminfagain}.

\begin{proposition}\label{1}
For $\alpha\in[1,\beta]$,
\begin{equation}\label{est-mgfalphabeta}
E_x^{bx^\gamma;\text{Ref}\leftarrow:\beta}\exp (\lambda T_\alpha)\le 2,\ \text{for}\ x\in[\alpha, \beta]\
  \text{and}\ \lambda\le \hat\lambda(\alpha,\beta),
\end{equation}
where
\begin{equation}\label{hatlambda}
\hat\lambda(\alpha,\beta)=\exp\Big(-\big(2+2b\max(\alpha^\gamma,\beta^\gamma)\big)(\beta-\alpha)\Big).
\end{equation}
\end{proposition}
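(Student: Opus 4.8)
\emph{Proof proposal.} The plan is to dominate $E_x^{bx^\gamma;\text{Ref}\leftarrow:\beta}\exp(\lambda T_\alpha)$ by constructing a super-solution of the associated parabolic inequality and evaluating it along the process, via Itô's formula and optional stopping. I would first dispose of the trivial cases $\lambda\le0$ and $\alpha=\beta$ (where $e^{\lambda T_\alpha}\le1$), and note that, by path continuity, the process started at $x\in[\alpha,\beta]$ stays in $[\alpha,\beta]$ up to time $T_\alpha$, so only the diffusion on $[\alpha,\beta]$ reflected at $\beta$ is relevant (no reflection at the endpoint $1$ enters). The heart of the argument is to produce a function $u\in C^2([\alpha,\beta])$ with
$$
1\le u\le 2\ \text{on}\ [\alpha,\beta],\qquad u'(\beta)\ge0,\qquad \big(L_{bx^\gamma}+\lambda\big)u\le0\ \text{on}\ [\alpha,\beta]
$$
for every $0<\lambda\le\hat\lambda(\alpha,\beta)$.

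Given such a $u$, I would set $M_s=e^{\lambda(s\wedge T_\alpha)}u\big(X(s\wedge T_\alpha)\big)$. Since the reflected diffusion solves $dX_s=dW_s+bX_s^\gamma\,ds-d\mathcal L_s$, with $\mathcal L$ nondecreasing and supported on $\{X_s=\beta\}$ (the inward normal at the right endpoint $\beta$ being $-1$), Itô's formula gives, for $s\le t$,
$$
M_s=u(x)+\int_0^{s\wedge T_\alpha}e^{\lambda r}\big(L_{bx^\gamma}+\lambda\big)u(X_r)\,dr+\int_0^{s\wedge T_\alpha}e^{\lambda r}u'(X_r)\,dW_r-u'(\beta)\int_0^{s\wedge T_\alpha}e^{\lambda r}\,d\mathcal L_r.
$$
On $[0,t]$ the stochastic integral is a genuine martingale (bounded integrand, finite horizon), the drift integral is $\le0$ by the third property of $u$, and the local-time integral is $\le0$ by the second property together with monotonicity of $\mathcal L$; hence $E_xM_t\le u(x)$. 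Since $u\ge1$ this gives $E_x e^{\lambda(t\wedge T_\alpha)}\le u(x)\le2$, and letting $t\to\infty$ with monotone convergence yields $T_\alpha<\infty$ a.s. together with $E_x^{bx^\gamma;\text{Ref}\leftarrow:\beta}e^{\lambda T_\alpha}\le u(x)\le2$, which is \eqref{est-mgfalphabeta}.

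For the construction of $u$ I would take $m=\max(\alpha^\gamma,\beta^\gamma)$, so that $0<bx^\gamma\le bm$ on $[\alpha,\beta]$, set $K=2+2bm$, and define $u(x)=2-e^{-K(x-\alpha)}$; then $u(\alpha)=1$, $u$ increases with $1\le u<2$ on $[\alpha,\beta]$, and $u'(\beta)=Ke^{-K(\beta-\alpha)}>0$, so the first two properties are immediate. For the third, a direct computation gives
$$
\big(L_{bx^\gamma}+\lambda\big)u(x)=2\lambda+e^{-K(x-\alpha)}\Big(bx^\gamma K-\tfrac12K^2-\lambda\Big),
$$
and the choice $K=2+2bm$ makes $\tfrac12K^2-bmK=K\big(\tfrac12K-bm\big)=2+2bm$, so that $bx^\gamma K-\tfrac12K^2-\lambda\le bmK-\tfrac12K^2-\lambda=-(2+2bm+\lambda)<0$ uniformly in $x\in[\alpha,\beta]$. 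Since this coefficient is negative and $e^{-K(x-\alpha)}\ge e^{-K(\beta-\alpha)}=\hat\lambda(\alpha,\beta)$ (recall \eqref{hatlambda}), the product is at most $-(2+2bm+\lambda)\hat\lambda(\alpha,\beta)$, whence $\big(L_{bx^\gamma}+\lambda\big)u(x)\le 2\lambda-(2+2bm+\lambda)\hat\lambda(\alpha,\beta)$, which for $0<\lambda\le\hat\lambda(\alpha,\beta)$ is at most $-(2bm+\lambda)\hat\lambda(\alpha,\beta)<0$, as needed.

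The step I expect to be the main obstacle — really the only nonroutine one — is making the third condition on $u$ hold with a sharp enough exponent: $K$ must be chosen so that $\tfrac12K^2-bmK\ge2$, and $K=2+2b\max(\alpha^\gamma,\beta^\gamma)$ is the natural choice that does this while making $e^{-K(\beta-\alpha)}$ an admissible value of $\lambda$; this is exactly how the explicit formula \eqref{hatlambda} for $\hat\lambda$ arises. Everything else is standard: the favorable sign of the local-time term comes from reflection being at the right endpoint together with $u'(\beta)\ge0$, and $M_s$ is a true (not merely local) supermartingale on each finite interval because it is bounded there.
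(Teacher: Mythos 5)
Your proposal is correct, and the core construction --- the supersolution $u(x)=2-e^{-K(x-\alpha)}$ with $K=2+2b\max(\alpha^\gamma,\beta^\gamma)$, and the computation showing $(L_{bx^\gamma}+\lambda)u\le 0$ on $[\alpha,\beta]$ for $0<\lambda\le\hat\lambda(\alpha,\beta)$ --- is exactly the one in the paper. Where you diverge is in how the pointwise inequality, together with $1\le u\le 2$ and $u'(\beta)\ge0$, is converted into the estimate $E_x^{bx^\gamma;\text{Ref}\leftarrow:\beta}e^{\lambda T_\alpha}\le u(x)$. You apply It\^o's formula to $e^{\lambda(s\wedge T_\alpha)}u(X(s\wedge T_\alpha))$, observe that the drift term has the right sign, that the local-time term is nonpositive because reflection occurs at the right endpoint and $u'(\beta)\ge0$, that the stochastic integral is a true martingale on finite horizons (bounded integrand), and conclude by optional stopping and monotone convergence. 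The paper instead routes the argument through the criticality theory of second-order elliptic operators: the existence of a positive supersolution with the given boundary conditions forces the principal eigenvalue of $-L_{bx^\gamma}$ on $(\alpha,\beta)$ (Dirichlet at $\alpha$, Neumann at $\beta$) to exceed $\hat\lambda$, whence by Feynman--Kac $u_\lambda(x)=E_x e^{\lambda T_\alpha}$ is finite and solves the corresponding BVP, and the generalized maximum principle then gives $u_\lambda\le u$. Your route is more elementary and self-contained (it needs only It\^o calculus and no invocation of principal eigenvalue theory or the generalized maximum principle), and it delivers the finiteness of $E_x e^{\lambda T_\alpha}$ and the a.s. finiteness of $T_\alpha$ as byproducts rather than as consequences of a prior eigenvalue comparison. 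The paper's route, while heavier, plugs directly into the framework used systematically elsewhere in the paper (Propositions~\ref{2}--\ref{4} all cite Feynman--Kac and the BVP characterization), so it keeps the section uniform. Both are rigorous; the choice is stylistic.
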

\begin{proof}
Of course, it suffices to work with $\lambda\ge0$.
Consider the function
\begin{equation}\label{u}
u(x)=2-\exp(-r(x-\alpha)), \  \alpha\le x\le \beta,
\end{equation}
where $r>0$.
Then
\begin{equation}\label{evestimate}
\exp(r(x-\alpha))(L_{bx^\gamma}+\lambda)u=-\frac12r^2+rbx^\gamma-\lambda +2\lambda \exp(r(x-\alpha)),\ x\in[\alpha,\beta].
\end{equation}
For $\lambda\ge0$,
$$
\begin{aligned}
&\sup_{x\in[\alpha,\beta]}\big(-\frac12r^2+rbx^\gamma-\lambda +2\lambda \exp(r(x-\alpha))\big)\le\\
& -\frac12r^2+rb\max(\alpha^\gamma,\beta^\gamma)-\lambda +2\lambda \exp(r(\beta-\alpha)).
\end{aligned}
$$
Thus, we have
$(L_{bx^\gamma}+\lambda)u\le 0$ on $[\alpha, \beta]$ if
$$
0\le\lambda\le \frac{r\big(\frac r2-b\max(\alpha^\gamma,\beta^\gamma)\big)}{2\exp(r(\beta-\alpha))-1}.
$$
Choosing
$$
r=2+2b\max(\alpha^\gamma,\beta^\gamma),
$$
 it follows that the right hand side of the above inequality
is greater than
$\hat\lambda(\alpha,\beta)$.
We have thus shown that
there exists a positive function $u$ on $[\alpha,\beta]$ satisfying
$(L_{bx^\gamma}+\hat\lambda(\alpha,\beta))u\le0$ in $[\alpha,\beta]$ and $u'(\beta)\ge0$.
By the criticality theory of second order elliptic operators \cite[chapter 4]{P}, \cite{Pinc},
it follows that the principal eigenvalue for $-L_{bx^\gamma}$ on $(\alpha, \beta)$ with
the Dirichlet boundary condition at $\alpha$ and the Neumann boundary condition at $\beta$
is larger than $\hat\lambda(\alpha,\beta).$
By the Feynman-Kac  formula, when $\lambda$ is less than the aforementioned principal eigenvalue, the function
$u_\lambda(x)\equiv E_x^{bx^\gamma;\text{Ref}\leftarrow:\beta}\exp (\lambda T_\alpha)$ satisfies the boundary-value
problem
$(L_{bx^\gamma}+\lambda)u=0$ in $(\alpha,\beta)$, $u(\alpha)=1$ and $u'(\beta)=0$.
Since $\lambda$ is smaller than the principal eigenvalue, it follows from the generalized maximum principal \cite[chapter 3]{P}, \cite{Pinc} that
$u_\lambda\le u$, if
$u$ satisfies $(L+\lambda)u\le 0$ in $[\alpha,\beta]$, $u(\alpha)\ge1$ and $u'(\beta)\ge0$.
The calculation above showed that $u$
as defined in \eqref{u}, with $r=2+2b\max(\alpha^\gamma, \beta^\gamma)$,
satisfies these requirements; thus in particular,
\eqref{est-mgfalphabeta} holds.
\end{proof}
\begin{proposition}\label{2}
For $1\le x<\beta$,
\begin{equation}\label{prop2form}
E_x^{D;\text{\rm Ref}\rightarrow:1}\exp(\frac{D^2}2 T_\beta)=\frac{\exp(D(\beta-1))}{1+D(\beta-1)}\Big(1+D(x-1)\Big)\exp\big(-D(x-1)\big).
\end{equation}
\end{proposition}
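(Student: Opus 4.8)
The plan is to recognize the left side of \eqref{prop2form} as the solution of a boundary value problem via the Feynman--Kac formula, to solve that problem explicitly --- it collapses to an elementary form precisely at the parameter value $\lambda=\frac{D^2}2$ --- and then to verify the one hypothesis that makes Feynman--Kac applicable here.

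Concretely, I would set $\lambda=\frac{D^2}2$ and, exactly as in the proof of Proposition \ref{1}, write down the problem that $u(x):=E_x^{D;\text{Ref}\rightarrow:1}\exp(\lambda T_\beta)$ ought to solve: $(L_D+\lambda)u=0$ on $(1,\beta)$, with $u'(1)=0$ (the Neumann condition encoding reflection at the fixed endpoint) and $u(\beta)=1$ (from the definition of $T_\beta$). The equation $u''+2Du'+2\lambda u=0$ has characteristic polynomial $m^2+2Dm+2\lambda$, and the choice $\lambda=\frac{D^2}2$ is exactly what makes this have the \emph{double} root $m=-D$; hence every solution is of the form $(A+B(x-1))\exp(-D(x-1))$. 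Imposing $u'(1)=0$ forces $B=DA$, and then $u(\beta)=1$ forces $A=\exp(D(\beta-1))/(1+D(\beta-1))$, which reproduces the right side of \eqref{prop2form}; note also that this $u$ is strictly positive on $[1,\beta]$, since $1+D(x-1)>0$ there.

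The step needing genuine care --- the main obstacle --- is licensing the Feynman--Kac representation, i.e.\ showing $E_x\exp(\lambda T_\beta)<\infty$ and equal to the solution just found; this requires $\lambda$ to lie strictly below the principal eigenvalue $\lambda_0$ of $-L_D$ on $(1,\beta)$ with the Neumann condition at $1$ and the Dirichlet condition at $\beta$. I would obtain this in two moves: since $u$ is a \emph{positive} solution of $(L_D+\lambda)u=0$ with $u'(1)=0$, the criticality theory of second order elliptic operators used in the proof of Proposition \ref{1} gives $\lambda\le\lambda_0$; and equality is impossible, because by the computation above every solution with $u'(1)=0$ at $\lambda=\frac{D^2}2$ equals $(A+DA(x-1))\exp(-D(x-1))$, which never vanishes for $A\ne0$ and so cannot be the principal eigenfunction (which must satisfy the Dirichlet condition at $\beta$). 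Hence $\lambda<\lambda_0$, Feynman--Kac applies, the boundary value problem has a unique solution because $\lambda$ is not an eigenvalue, and therefore $E_x^{D;\text{Ref}\rightarrow:1}\exp(\lambda T_\beta)=u(x)$, which is \eqref{prop2form}. I expect the only delicate point to be this eigenvalue comparison; everything else is elementary ODE bookkeeping, modulo the routine remark that for $\frac12\frac{d^2}{dx^2}+D\frac d{dx}$ reflection at $1$ translates into the plain Neumann condition $u'(1)=0$ (its scale function gives $\frac{du}{ds}(1)=e^{2D}u'(1)$).
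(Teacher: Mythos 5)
Your proof is correct and follows essentially the same route as the paper's: exhibit the explicit positive solution of the boundary value problem $(L_D+\tfrac{D^2}{2})u=0$, $u'(1)=0$, $u(\beta)=1$, and invoke criticality theory to place $\tfrac{D^2}{2}$ strictly below the principal eigenvalue so that the Feynman--Kac representation (and hence uniqueness) applies. Your explicit derivation of $u$ via the double root $m=-D$ of the characteristic polynomial, and your argument that equality with the principal eigenvalue is impossible because $u(\beta)\neq 0$, simply flesh out what the paper asserts in one line.
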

\begin{proof}
The function
$$
u(x)=\frac{\exp(D(\beta-1))}{1+D(\beta-1)}\Big(1+D(x-1)\Big)\exp\big(-D(x-1)\big)
$$
solves the boundary value problem $(L_D+\frac{D^2}2)u=0$ in $(1,\beta)$ with
$u'(1)=0$ and $u(\beta)=1$.
Since $u>0$, it follows again from the criticality theory of elliptic operators that
the principal eigenvalue of $-L_D$ on $(1,\beta)$ with the Neumann boundary condition at 1
and the Dirichlet boundary condition at $\beta$ is greater than $\frac{D^2}2$.
Thus, $E_x^{D;\text{Ref}\rightarrow:1}\exp(\frac{D^2}2 T_\beta)<\infty$ and by the Feynman-Kac
formula, this function of $x\in[1,\beta]$ solves the above boundary value problem,
and consequently coincides with $u$.
\end{proof}
\begin{proposition}\label{3}
For $\lambda>0$ and $1<\alpha<\beta$,
\begin{equation*}
\begin{aligned}
&E_\beta^{D;\text{\rm Ref}\leftarrow:\beta}\exp(-\lambda T_\alpha)=\\
&\frac{2\sqrt{D^2+2\lambda}\thinspace e^{-2D(\beta-\alpha)}}
{(-D+\sqrt{D^2+2\lambda}\thinspace)\thinspace e^{(-D+\sqrt{D^2+2\lambda}\thinspace)(\beta-\alpha)}+
(D+\sqrt{D^2+2\lambda}\thinspace)\thinspace e^{(-D-\sqrt{D^2+2\lambda}\thinspace)(\beta-\alpha)}}.
\end{aligned}
\end{equation*}
\end{proposition}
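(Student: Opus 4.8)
The plan is to proceed exactly as in the proofs of Propositions \ref{1} and \ref{2}: identify $v(x):=E_x^{D;\text{Ref}\leftarrow:\beta}\exp(-\lambda T_\alpha)$, for $x\in[\alpha,\beta]$, as the unique solution of a linear two‑point boundary value problem, and then write that solution down explicitly. Since $0\le\exp(-\lambda T_\alpha)\le1$, the function $v$ is bounded by $1$, so in contrast to Proposition \ref{1} there is no question of finiteness. The constant function $1$ satisfies $-L_D1=0$ on $(\alpha,\beta)$ together with the Neumann condition at $\beta$, so by the criticality theory of second order elliptic operators the principal eigenvalue of $-L_D$ on $(\alpha,\beta)$ with the Dirichlet condition at $\alpha$ and the Neumann condition at $\beta$ is nonnegative, hence strictly larger than $-\lambda$. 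Therefore the Feynman--Kac formula applies and shows that $v$ is the unique solution of
\[
(L_D-\lambda)v=0\ \text{in}\ (\alpha,\beta),\qquad v(\alpha)=1,\qquad v'(\beta)=0,
\]
the Dirichlet condition at $\alpha$ encoding the stopping there (note that starting from $x\ge\alpha>1$ the process reaches $\alpha$ before $1$, so only its behavior on $[\alpha,\beta]$ matters) and the Neumann condition at $\beta$ the reflection there.

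It then remains to solve this constant‑coefficient ordinary differential equation. Setting $\rho:=\sqrt{D^2+2\lambda}$, the characteristic equation $\frac12\mu^2+D\mu-\lambda=0$ has the two real roots $\mu_\pm=-D\pm\rho$, and because $\lambda>0$ one has $\rho>|D|$, so that $\mu_-<0<\mu_+$ and both $\rho-D$ and $\rho+D$ are positive. Writing $v(x)=Ae^{\mu_+x}+Be^{\mu_-x}$, the condition $v'(\beta)=0$ gives $A\mu_+e^{\mu_+\beta}+B\mu_-e^{\mu_-\beta}=0$ and the condition $v(\alpha)=1$ gives $Ae^{\mu_+\alpha}+Be^{\mu_-\alpha}=1$; solving this $2\times2$ system for $A,B$ and substituting into $v(\beta)=Ae^{\mu_+\beta}+Be^{\mu_-\beta}$ yields, after a routine simplification (in the last step multiplying numerator and denominator by $e^{-D(\beta-\alpha)}$),
\[
v(\beta)=\frac{2\rho\,e^{-2D(\beta-\alpha)}}{(\rho-D)\,e^{\mu_+(\beta-\alpha)}+(\rho+D)\,e^{\mu_-(\beta-\alpha)}},
\]
which is precisely the asserted identity once $\rho$ and $\mu_\pm$ are written out in terms of $D$ and $\lambda$.

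I do not expect any genuine obstacle here. The only two points that require a word of justification are the Feynman--Kac representation of $v$ and the uniqueness of the boundary value problem; both are handled exactly as in Propositions \ref{1} and \ref{2}, and here they are in fact easier because the functional $\exp(-\lambda T_\alpha)$ is bounded by $1$. (Uniqueness may also be seen directly: the homogeneous problem $(L_D-\lambda)w=0$, $w(\alpha)=0$, $w'(\beta)=0$ has determinant $\mu_-e^{\mu_+\alpha+\mu_-\beta}-\mu_+e^{\mu_-\alpha+\mu_+\beta}\ne0$ since $\mu_-<0<\mu_+$, so only $w\equiv0$ solves it.) Everything else is elementary algebra.
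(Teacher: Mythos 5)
Your proof is correct and takes the same route as the paper: invoke the Feynman--Kac representation to identify $E_x^{D;\text{Ref}\leftarrow:\beta}\exp(-\lambda T_\alpha)$ as the solution of the constant-coefficient boundary value problem $(L_D-\lambda)u=0$ on $(\alpha,\beta)$ with $u(\alpha)=1$, $u'(\beta)=0$, then solve explicitly and evaluate at $x=\beta$. Your additional remarks on the positivity of the principal eigenvalue, the observation that the functional is bounded by $1$, and the direct uniqueness check via the Wronskian determinant are sound bits of justification that the paper leaves implicit, but they do not change the argument.
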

\begin{proof}
By the Feynman-Kac formula, $E_x^{D;\text{Ref}\leftarrow:\beta}\exp(-\lambda T_\alpha)$, for $x\in[\alpha,\beta]$,
solves the boundary value problem
$(L_D-\lambda)u=0$ in $(\alpha, \beta)$, with $u(\alpha)=1$ and $u'(\beta)=0$.
The solution of this linear equation is given by
$$
u(x)=\frac{r_1e^{-r_1(\beta-\alpha)}e^{r_2(x-\alpha)}+r_2e^{r_2(\beta-\alpha)}e^{-r_1(x-\alpha)}}
{r_2e^{r_2(\beta-\alpha)}+r_1e^{-r_1(\beta-\alpha)}},
$$
where
$r_1=D+\sqrt{D^2+2\lambda}$ and $r_2=-D+\sqrt{D^2+2\lambda}$.
Substituting $x=\beta$ completes the proof.
\end{proof}

\begin{proposition}\label{4}
\begin{equation}
E_x^{bx^\gamma;\text{Ref}\rightarrow:\alpha}
\exp(\lambda\tau_\beta)\le 2,\ \text{for}\ x\in[\alpha,\beta]
\ \text{and} \ \lambda\le\bar\lambda,
\end{equation}
\end{proposition}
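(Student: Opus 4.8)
The plan is to run the argument of Proposition \ref{1} with the two endpoints of the interval interchanged. Here $\tau_\beta = T_\beta$ denotes the first hitting time of $\beta$, and we abbreviate $u_\lambda(x) = E_x^{bx^\gamma;\text{Ref}\rightarrow:\alpha}\exp(\lambda T_\beta)$ for $x\in[\alpha,\beta]$. Exactly as in Proposition \ref{1}, it suffices to produce a positive function $u$ on $[\alpha,\beta]$ with $(L_{bx^\gamma}+\bar\lambda)u\le 0$ in $(\alpha,\beta)$, $u\le 2$, $u(\beta)\ge 1$, and $u'(\alpha)\le 0$. Indeed, the existence of such a $u$ gives, by the criticality theory of second-order elliptic operators \cite[chapter~4]{P}, \cite{Pinc}, that the principal eigenvalue of $-L_{bx^\gamma}$ on $(\alpha,\beta)$ with the Neumann condition at the reflecting endpoint $\alpha$ and the Dirichlet condition at $\beta$ exceeds $\bar\lambda$; for $\lambda$ below that eigenvalue, the Feynman-Kac formula identifies $u_\lambda$ as the solution of $(L_{bx^\gamma}+\lambda)u=0$ in $(\alpha,\beta)$ with $u'(\alpha)=0$, $u(\beta)=1$; and the generalized maximum principle \cite[chapter~3]{P}, \cite{Pinc} then yields $u_\lambda\le u\le 2$.

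For the comparison function I would take
$$
u(x)=2-\exp\big(-r(\beta-x)\big),\qquad \alpha\le x\le\beta,
$$
for a constant $r>0$ to be chosen; note $0<u\le 2$, $u(\beta)=1$, and $u'(\alpha)=-r\exp(-r(\beta-\alpha))<0$. It is now the \emph{left} endpoint $\alpha$ at which we need the sign $u'(\alpha)\le 0$, which is exactly why the exponential is centered at $\beta$ rather than at $\alpha$ as in \eqref{u}. A direct computation, paralleling \eqref{evestimate}, gives
$$
\exp\big(r(\beta-x)\big)\,(L_{bx^\gamma}+\lambda)u=-\tfrac12 r^2-rbx^\gamma-\lambda+2\lambda\exp\big(r(\beta-x)\big).
$$
Since $bx^\gamma>0$ on $[\alpha,\beta]\subset[1,\infty)$, the drift term $-rbx^\gamma$ is negative and may simply be discarded; bounding the last exponential by its value at $x=\alpha$, the right-hand side is at most $-\tfrac12 r^2-\lambda+2\lambda\exp(r(\beta-\alpha))$, which is $\le 0$ as soon as
$$
\lambda\le\frac{r^2/2}{2\exp(r(\beta-\alpha))-1}.
$$
Taking $r=2$ shows the right side is at least $\exp(-2(\beta-\alpha))$, so any $\bar\lambda\le\exp(-2(\beta-\alpha))$ works; this is the analogue of $\hat\lambda(\alpha,\beta)$ in \eqref{hatlambda}, but without the drift contribution, and one may instead take $r=2+2b\max(\alpha^\gamma,\beta^\gamma)$ if a $\bar\lambda$ of precisely the form \eqref{hatlambda} is wanted.

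There is essentially no serious obstacle here, precisely because the drift now pushes the process toward $\beta$: dropping $-rbx^\gamma$ is the only place the sign of the drift is used, and it is used in our favor, so no upper bound on $B$ is needed, only its positivity on $[1,\infty)$. The single point requiring care is the boundary orientation in the first and last steps — the roles of the Neumann and Dirichlet endpoints are swapped relative to Proposition \ref{1}, so one should verify that the version of the generalized maximum principle being invoked carries the inequality $u'(\alpha)\le 0$ at a reflecting left endpoint; equivalently, one may apply the change of variable $x\mapsto\alpha+\beta-x$ to reduce the whole statement literally to the set-up of Proposition \ref{1}.
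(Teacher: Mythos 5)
Your comparison function $u(x)=2-\exp\big(-r(\beta-x)\big)$ and the reduction via the criticality theory / generalized maximum principle / Feynman--Kac are exactly the paper's; the issue is in the two choices you make afterward, and together they leave a genuine gap. First, you discard the drift term $-rbx^\gamma$ and then fix $r=2$. This does prove \emph{some} bound, but the resulting threshold decays like $\exp\big(-2(\beta-\alpha)\big)$. The paper instead keeps the drift term (it is the dominant, favorable term when $\beta-\alpha$ is large) and chooses $r=\frac{1}{\beta-\alpha}$, so that $\exp\big(r(\beta-\alpha)\big)=e$ becomes a universal constant and the condition
\[
\lambda\le\frac{\tfrac12 r^2+br\min(\alpha^\gamma,\beta^\gamma)}{2\exp\big(r(\beta-\alpha)\big)-1}
\]
reduces to $\lambda\le\bar\lambda=\frac{b\min(\alpha^\gamma,\beta^\gamma)}{(2e-1)(\beta-\alpha)}$. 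Note this is \emph{not} of the form \eqref{hatlambda}; $\bar\lambda$ decays only like $1/(\beta-\alpha)$ rather than exponentially, and its numerator carries the drift coefficient $b\min(\alpha^\gamma,\beta^\gamma)$, not the diffusive $r^2/2$ piece.

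This is not merely a matter of sharpness. Proposition \ref{4} is invoked in the proof of \eqref{q0roleagain} with $\alpha=f(t_j)-Mt_j^{q_0}$ and $\beta=f(t_{j+1})$, where $\beta-\alpha$ is of order $j^{kl-1}\to\infty$, and the argument hinges on showing $(t_{j+1}-t_j)\bar\lambda\to\infty$. With the polynomial decay $\bar\lambda\asymp j^{kl\gamma-kl+1}$ that the paper's choice gives, this works precisely because $l(1-\gamma)<1$; with your exponentially small $\bar\lambda\le\exp\big(-2(\beta-\alpha)\big)$ the product $(t_{j+1}-t_j)\bar\lambda$ tends to $0$ and the subsequent Markov-inequality estimate collapses. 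So the structural outline of your proof is right, but you must keep the drift term in the numerator and take $r$ inversely proportional to $\beta-\alpha$ in order to obtain a $\bar\lambda$ strong enough for the intended application.
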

where $\bar \lambda=\frac{b\min(\alpha^\gamma,\beta^\gamma)}{(2e-1)(\beta-\alpha)}$.
\begin{proof}
The proof is similar to that of Proposition \ref{1}. By the Feynman-Kac formula, when
$\lambda$ is less than the principal eigenvalue for the operator
$L_{bx^\gamma}$ on $(\alpha,\beta)$ with the Neumann boundary condition at $\alpha$
and the Dirichlet boundary condition at $\beta$, the function $u_\lambda(x)\equiv E_x^{bx^\gamma;\text{Ref}\rightarrow:\alpha}
\exp(\lambda\tau_\beta)$ solves the equation $(L_{bx^\gamma}+\lambda)u=0$ in $(\alpha, \beta)$, $u'(\alpha)=0$ and
$u(\beta)=1$. Also, if $u>0$ satisfies $(L_{bx^\gamma}+\lambda)u\le 0$ in $(\alpha, \beta)$, $u'(\alpha)\le0$ and
$u(\beta)\ge1$, then $\lambda$ is smaller than the principal eigenvalue and $u_\lambda\le u$.
We look for such a function  $u$ in the form
$u(x)=2-\exp\big(-r(\beta-x)\big)$, where $r>0$.
Note then that  $u(\beta)=1$, $u'(\alpha)\le 0$ and $1\le u\le 2$ on $[\alpha,\beta]$.
We have
$$
\exp\big(r(\beta-x)\big)\big(L_{bx^\gamma}+\lambda\big)u=(-\frac12r^2-bx^\gamma r-\lambda)+2\lambda\exp\big(r(\beta-x)\big).
$$
It follows readily that if
\begin{equation}\label{lambdachoice}
\lambda\le \frac{\frac12r^2+br\min(\alpha^\gamma,\beta^\gamma)}{2\exp(r(\beta-\alpha))-1},
\end{equation}
then $(L_{bx^\gamma}+\lambda)u\le 0$ on $[\alpha,\beta]$.
With the choice $r=\frac1{\beta-\alpha}$ in \eqref{lambdachoice}, it is clear that $\bar\lambda$ in the statement of the proposition is smaller
than the right hand side of \eqref{lambdachoice}.  Thus, $u_\lambda(x)\le u(x)\le 2$, for $\lambda\le \bar\lambda$.
\end{proof}

\section{Proof of Theorem \ref{1}}
We will denote probabilities for the process staring from  1 at time 0
by  $P_1$.
Let $\mathcal{F}_t=\sigma(X(s), 0\le s\le t)$ denote the
 standard filtration
on real-valued continuous paths $X(t)$. By standard comparison results and the fact that
the  transience/recurrence dichotomy is not affected by a bounded change
in the drift over a compact set,
we may assume that
\begin{equation}\label{bfassump}
B(x)=bx^\gamma, \ \text{for all}\ x\ge1,\ \ \ \ \ \ \  f(t)=\begin{cases} 2,\ t\in[0,\exp\big((\frac2c)^{1+\gamma}\big)];\\
c(\log t)^{\frac1{1+\gamma}}, \ t> \exp\big((\frac2c)^{1+\gamma}\big).\end{cases}
\end{equation}
\medskip

\it\noindent Proof of (i).\rm\
Let $j_0=[(\frac2c)^{1+\gamma}]+1$. Let $t_j=e^j$. Then $f(t_j)=cj^{\frac1{1+\gamma}}$, for $j\ge j_0$.
For $j\ge j_0$, let $A_{j+1}$ denote the event that the process hits 1 at some time $t\in[t_j,t_{j+1}]$.
The conditional version of the Borel-Cantelli lemma \cite{D} shows that if
\begin{equation}\label{BC-cond-infty}
\sum_{j=j_0}^\infty P_1(A_{j+1}|\mathcal{F}_{t_j})=\infty, \ \text{a.s.},
\end{equation}
then  $P_1(A_j\ \text{i.o.})=1$,  and thus the process is recurrent.
Thus, to show recurrence, it suffices to show \eqref{BC-cond-infty}.

Since up to time $t_j$, the largest the process can be is $f(t_j)$, and
since
up to time $t_{j+1}$ the time-dependent domain is contained in $[1,f(t_{j+1})]$, it follows
by comparison that
\begin{equation}\label{Ajcond}
P_1(A_{j+1}|\mathcal{F}_{t_j})\ge P_{f(t_j)}^{bx^\gamma;\text{Ref}\leftarrow:f(t_{j+1})}(T_1\le t_{j+1}-t_j)\ \text{a.s.}
\end{equation}
We  estimate the right hand side of \eqref{Ajcond}.
Let $\sigma^{(j)}_0=0$, $\kappa^{(j)}_i=\inf\{t\ge\sigma^{(j)}_{i-1}: X(t)=f(t_{j+1})\}$ and
$\sigma^{(j)}_i=\inf\{t>\kappa^{(j)}_i: X(t)=f(t_j)\}$, $j\ge j_0,\ i=1, 2, \ldots$.
 For any $l_j\in \mathbb{N}$,
$$
\{T_1<\sigma^{(j)}_{l_j}\}-\{\sigma^{(j)}_{l_j}>t_{j+1}-t_j\}\subset
\{T_1\le t_{j+1}-t_j\}.
$$
Also,    it follows by the strong Markov property that
$$
P_{f(t_j)}^{bx^\gamma;\text{Ref}\leftarrow:f(t_{j+1})}(T_1<\sigma^{(j)}_{l_j})=1- \big(P_{f(t_j)}^{bx^\gamma;\text{Ref}\leftarrow:f(t_{j+1})}(T_{f(t_{j+1})}<T_1)\big)^{l_j}.
$$
Thus
 \begin{equation}\label{key}
\begin{aligned}
&P_{f(t_j)}^{bx^\gamma;\text{Ref}\leftarrow:f(t_{j+1})}(T_1\le t_{j+1}-t_j)\ge1- \big(P_{f(t_j)}^{bx^\gamma;\text{Ref}\leftarrow:f(t_{j+1})}(T_{f(t_{j+1})}<T_1)\big)^{l_j}-\\
&P_{f(t_j)}^{bx^\gamma;\text{Ref}\leftarrow:f(t_{j+1})}(\sigma^{(j)}_{l_j}>t_{j+1}-t_j).
\end{aligned}
\end{equation}
From \eqref{BC-cond-infty}-\eqref{key}, we will obtain $P_1(A_j\ \text{i.o.})=1$, and thus recurrence,  if we can select
$\{l_j\}_{j=1}^\infty$ such that
\begin{equation}\label{key1}
\sum_{j=j_0}^\infty\Big(1-\big(P_{f(t_j)}^{bx^\gamma;\text{Ref}\leftarrow:f(t_{j+1})}(T_{f(t_{j+1})}<T_1)\big)^{l_j}\Big)=\infty,
\end{equation}
and
\begin{equation}\label{key2}
\sum_{j=j_0}^\infty P_{f(t_j)}^{bx^\gamma;\text{Ref}\leftarrow:f(t_{j+1})}(\sigma^{(j)}_{l_j}>t_{j+1}-t_j)<\infty.
\end{equation}

Let
\begin{equation}\label{phi}
\phi(x)=
\int_x^\infty\exp(-\int_0^t2bs^\gamma ds)dt=\int_x^\infty \exp(-\frac{2bt^{1+\gamma}}{1+\gamma})dt, \ x\ge1.
\end{equation}
Since $L\phi=0$,
it follows by standard probabilistic potential theory \cite[chapter 5]{P} that
\begin{equation}\label{pottheory}
P_{f(t_j)}^{bx^\gamma;\text{Ref}\leftarrow:f(t_{j+1})}(T_{f(t_{j+1})}<T_1)=\frac{\phi(1)-\phi(f(t_j))}{\phi(1)-\phi(f(t_{j+1}))}=
1-\frac{\phi(f(t_j))-\phi(f(t_{j+1}))}{\phi(1)-\phi(f(t_{j+1}))}.
\end{equation}
Applying L'H\^opital's rule shows that
$$
\lim_{x\to\infty}\frac{\int_x^\infty\exp(-\frac{2bt^{1+\gamma}}{1+\gamma})dt}{x^{-\gamma}\exp(-\frac{2bx^{1+\gamma}}{1+\gamma})}
=\frac1{2b};
$$
thus,
\begin{equation}\label{phiasymp}
\phi(x)\sim\frac1{2b}x^{-\gamma}\exp(-\frac{2bx^{1+\gamma}}{1+\gamma}),\ \text{as}\ x\to\infty.
\end{equation}
Using the fact that $(1-t)^l\le \exp(-lt)\le1-lt+\frac12(lt)^2\le1-\frac12lt$, if $l,t\ge0$ and $lt\le 1$, along with
\eqref{pottheory}, we have
\begin{equation}\label{lfto0}
\begin{aligned}
&1-\big(P_{f(t_j)}^{bx^\gamma;\text{Ref}\leftarrow:f(t_{j+1})}(T_{f(t_{j+1})}<T_1)\big)^{l_j}
\ge\frac12l_j\frac{\phi(f(t_j))-\phi(f(t_{j+1}))}{\phi(1)-\phi(f(t_{j+1}))}, \\
& \text{for sufficiently large}\ j,\
 \text{if}\
\lim_{j\to\infty}l_j\phi(f(t_j))=0.
\end{aligned}
\end{equation}
Using \eqref{phiasymp} along with the facts that $f(x)=c(\log x)^{\frac1{1+\gamma}}$ and $t_j=e^j$,
it follows that there exists a $K_0\in(0,1)$ such that $\phi(f(t_{j+1}))\le K_0\phi(f(t_j))$ for all large
$j$. Thus,
\begin{equation}\label{asympphidiff}
\begin{aligned}
&\frac{\phi(f(t_j))-\phi(f(t_{j+1}))}{\phi(1)-\phi(f(t_{j+1}))}\ge K_1\phi(f(t_j))\ge K_2\thinspace j^{-\frac\gamma{1+\gamma}}
\exp(-\frac{2bc^{1+\gamma}}{1+\gamma}j),\\
& \text{for sufficiently large}\ j,
\end{aligned}
\end{equation}
for constants $K_1,K_2>0$.
From \eqref{lfto0} and \eqref{asympphidiff}, it follows that
\eqref{key1} will hold if we define $l_j\in\mathbb{N}$ by
\begin{equation}\label{ljselection}
l_j=[\frac1{j^{\frac1{1+\gamma}}\log j}
\exp(\frac{2bc^{1+\gamma}}{1+\gamma}j)],
\end{equation}
since then the general term,
$1-\big(P_{f(t_j)}^{bx^\gamma;\text{Ref}\leftarrow:f(t_{j+1})}(T_{f(t_{j+1})}<T_1)\big)^{l_j}$, in \eqref{key1} will be on the
order at least $\frac 1{j\log j}$.

With $l_j$ chosen as above,
we now analyze $P_{f(t_j)}^{bx^\gamma;\text{Ref}\leftarrow:f(t_{j+1})}(\sigma^{(j)}_{l_j}>t_{j+1}-t_j)$ and show that \eqref{key2} holds.
By the strong Markov property,
 $\sigma^{(j)}_{l_j}=\sum_{i=1}^{l_j}X_i+\sum_{i=1}^{l_j}Y_i$,
where $\{X_i\}_{i=1}^\infty$ is an IID sequence distributed according to $T_{f(t_{j+1})}$ under
$P_{f(t_j)}^{bx^\gamma;\text{Ref}\rightarrow:1}$, $\{Y_i\}_{i=1}^\infty$ is an IID sequence distributed according to $T_{f(t_j)}$ under
$P_{f(t_{j+1})}^{bx^\gamma;\text{Ref}\leftarrow:f(t_{j+1})}$, and the two IID sequences are independent of one another.
By Markov's inequality,
\begin{equation}\label{key2est}
\begin{aligned}
&P_{f(t_j)}^{bx^\gamma;\text{Ref}\leftarrow:f(t_{j+1})}(\sigma^{(j)}_{l_j}>t)\le\exp(-\lambda t)
E_{f(t_j)}^{bx^\gamma;\text{Ref}\leftarrow:f(t_{j+1})}\exp(\lambda\sigma^{(j)}_{l_j})=\\
&\exp(-\lambda t)\big(E_{f(t_j)}^{bx^\gamma;\text{Ref}\rightarrow:1}\exp(\lambda T_{f(t_{j+1})} )\big)^{l_j}
\big(E_{f(t_{j+1})}^{bx^\gamma;\text{Ref}\leftarrow:f(t_{j+1})}\exp(\lambda T_{f(t_j)})\big)^{l_j},
\end{aligned}
\end{equation}
for any $\lambda>0$.

By Proposition
\ref{1},
\begin{equation}\label{fromprop1}
E_{f(t_{j+1})}^{bx^\gamma;\text{Ref}\leftarrow:f(t_{j+1})}\exp(\lambda T_{f(t_j)})\le
 2,\ \text{for}\ \lambda\le \hat \lambda(f(t_j),f(t_{j+1})),
\end{equation}
where $\hat\lambda(\cdot,\cdot)$ is as in \eqref{hatlambda}.
Using the fact that $f(t_j)=cj^\frac1{1+\gamma}$, it is  easy to check that
there exists a $\hat\lambda_0>0$ such that
\begin{equation}\label{hatlambdabound}
 \hat \lambda(f(t_j),f(t_{j+1}))\ge\hat\lambda_0,\ \text{for all}\ j\ge j_0.
\end{equation}

By comparison,
\begin{equation}\label{comparisonprop2}
E_{f(t_j)}^{bx^\gamma;\text{Ref}\rightarrow:1}\exp(\lambda T_{f(t_{j+1})} )\le
E_{f(t_j)}^{D_j;\text{Ref}\rightarrow:1}\exp(\lambda T_{f(t_{j+1})}),
\end{equation}
if
$$
D_j\le\min_{x\in[1,f(t_{j+1})]}bx^\gamma.
$$
If $\gamma\ge0$, choose $D_j=\min(b,\sqrt{2\hat\lambda_0}\thinspace)$, for all $j\ge j_0$; thus,
$\frac{D_j^2}2\le\hat\lambda_0$. If $\gamma\in(-1,0)$, choose
$D_j=b(f(t_{j+1}))^\gamma=bc^\gamma(j+1)^{\frac\gamma{1+\gamma}}$.
With these choices of $D_j$, we have for all $\gamma>-1$,
\begin{equation}\label{Dhatlambda}
\frac{D_j^2}2\le \hat\lambda_0,\ \text{ for sufficiently large}\ j.
\end{equation}
It is easy to check that if one substitutes $D=D_j$, $x=f(t_j)=c(\log j)^{\frac1{1+\gamma}}$ and $\beta=f(t_{j+1})=c(\log (j+1))^\frac1{1+\gamma}$ in the expression on the right hand side of \eqref{prop2form} in Proposition \ref{2}, the resulting expression
is bounded   in $j$. Letting $M>1$ be  an upper bound, it follows that
\begin{equation}\label{Mbound}
E_{f(t_j)}^{D_j;\text{Ref}\rightarrow:1}\exp(\frac{D_j^2}2 T_{f(t_{j+1})} )\le M.
\end{equation}
Noting that $t_{j+1}-t_j=e^{j+1}-e^j\ge e^j$, and
choosing $\lambda=\frac{D_j^2}2$ in \eqref{key2est}, it follows from \eqref{key2est}-\eqref{Mbound}
that
\begin{equation}\label{key3est}
P_{f(t_j)}^{bx^\gamma;\text{Ref}\leftarrow:f(t_{j+1})}(\sigma^{(j)}_{l_j}>t_{j+1}-t_j)\le\exp(-\frac{D_j^2}2e^j)(2M)^{l_j},\ \text{for sufficiently large}\ j.
\end{equation}
Recalling $l_j$ from
\eqref{ljselection}, we conclude from \eqref{key3est} that
\begin{equation}\label{finalsigmaj}
\begin{aligned}
&P_{f(t_j)}^{bx^\gamma;\text{Ref}\leftarrow:f(t_{j+1})}
(\sigma^{(j)}_{l_j}>t_{j+1}-t_j)\le \exp(-\frac{D_j^2}2e^j)(2M)^{j^{-\frac1{1+\gamma}}(\log j)^{-1}
\exp(\frac{2bc^{1+\gamma}}{1+\gamma}j)}=\\
& \exp(-\frac{D_j^2}2e^j)\exp\Big(j^{-\frac1{1+\gamma}}(\log j)^{-1}
e^{\frac{2bc^{1+\gamma}}{1+\gamma}j}\log 2M\Big),\
\text{for sufficiently large}\ j.
\end{aligned}
\end{equation}
Recalling that $D_j$ is equal to a positive constant, if $\gamma\ge0$,  and that $D_j$
is on the order $j^\frac{\gamma}{1+\gamma}$, if $\gamma<0$, it follows
that the right hand side of \eqref{finalsigmaj} is summable in $j$ if $\frac{2bc^{1+\gamma}}{1+\gamma}<1$,
or if
$\frac{2bc^{1+\gamma}}{1+\gamma}=1$ and $\gamma\ge-\frac12$. Thus \eqref{key2} holds for this range
of $b,c$ and $\gamma$.
This completes the proof of (i).
\medskip

\noindent \it Proof of (ii).\rm\
Let $j_1=[\exp\big((\frac2c)^{1+\gamma}\big)]+1$. Then $f(j)=c(\log j)^\frac1{1+\gamma}$, for $j\ge j_1$.
For $j\ge j_1$, let $B_j$ be the event that the process hits 1 sometime between the first time it hits
$f(j)$ and the first time it hits $f(j+1)$: $B_j=\{X(t)=1\
\text{for some}\ t\in(T_{f(j)}, T_{f(j+1)})\}$.
If we show that
\begin{equation}\label{BC-finite}
\sum_{j=j_1}^\infty P_1(B_j)<\infty,
\end{equation}
then by the Borel-Cantelli lemma it will follow that $P_1(B_j\ \text{i.o.})=0$, and consequently the process
is transient.

To prove \eqref{BC-finite}, we need to use different methods depending on whether $\gamma\le0$ or $\gamma>0$.
We begin with the case $\gamma\le 0$.
To consider whether or not the event $B_j$ occurs, we first wait until time $T_{f(j)}$.
Of course, necessarily, $T_{f(j)}\ge j$, since $f(j)$ is not accessible to the process before time $j$.
Since we may have $T_{f(j)}<j+1$, the point $f(j+1)$ may not be accessible to the process
at time $T_{f(j)}$, however, if we wait one unit of time, then after that, the point $f(j+1)$ certainly
will be accessible, since $T_{f(j)}+1\ge j+1$. Let $M_j<f(j)-1$. Now if in that one unit of time, the process
never got to the level $f(j)-M_j$, then by comparison, the probability of $B_j$ occurring is no more
than $P_{f(j)-M_j}^{bx^\gamma;\text{Ref}\leftarrow:f(j+1)}(T_1<T_{f(j+1)})$
(because after this one unit of time the process will be at a position greater than or equal to
$f(j)-M_j$). By comparison with the process that is reflected at the fixed point $f(j)$,
the probability that the process got to the level
$f(j)-M_j$ in that one unit of time is bounded from above
by $P_{f(j)}^{bx^\gamma;\text{Ref}\leftarrow:f(j)}(T_{f(j)-M_j}\le 1)$.
From these considerations, we conclude that
\begin{equation}\label{estBj}
P_1(B_j)\le P_{f(j)-M_j}^{bx^\gamma;\text{Ref}\leftarrow:f(j+1)}(T_1<T_{f(j+1)})+P_{f(j)}^{bx^\gamma;\text{Ref}\leftarrow:f(j)}(T_{f(j)-M_j}\le 1).
\end{equation}

Similar to \eqref{pottheory}, we have
\begin{equation}\label{pottheory2}
P_{f(j)-M_j}^{bx^\gamma;\text{Ref}\leftarrow:f(j+1)}(T_1<T_{f(j+1)})=\frac{\phi(f(j)-M_j)-\phi(f(j+1))}{\phi(1)-\phi(f(j+1))}.
\end{equation}
For  $\epsilon\in(0,1)$ to be chosen later sufficiently small, choose $M_j=\epsilon f(j)$. Recall that
$f(j)=c(\log j)^{\frac1{1+\gamma}}$. Then from
\eqref{phiasymp}
we have
\begin{equation}\label{estimatef-M}
\begin{aligned}
&\phi(f(j)-M_j)=\phi\big(c(1-\epsilon)(\log j)^\frac1{1+\gamma}\big)\sim\\
&\frac1{2b}\big(c(1-\epsilon)(\log j)^{\frac1{1+\gamma}}  \big)^{-\gamma}\exp\big(-\frac{2b(c(1-\epsilon))^{1+\gamma}\log j}{1+\gamma}\big)=\\
&\frac1{2b}\big(c(1-\epsilon)(\log j)^{\frac1{1+\gamma}}  \big)^{-\gamma}\thinspace j^{-\frac{2b(c(1-\epsilon))^{1+\gamma}}{1+\gamma}}.
\end{aligned}
\end{equation}
Since by assumption, $\frac{2bc^{1+\gamma}}{1+\gamma}>1$, we can select $\epsilon\in(0,1)$ such that
$\frac{2b(c(1-\epsilon))^{1+\gamma}}{1+\gamma}>1$. With such a choice of $\epsilon$, it follows from \eqref{pottheory2}
and \eqref{estimatef-M} that
\begin{equation}\label{firstguy}
\sum_{j=j_1}^\infty P_{f(j)-M_j}^{bx^\gamma;\text{Ref}\leftarrow:f(j+1)}(T_1<T_{f(j+1)})<\infty.
\end{equation}

We now estimate $P_{f(j)}^{bx^\gamma;\text{Ref}\leftarrow:f(j)}(T_{f(j)-M_j}\le 1)$, where $M_j=\epsilon f(j)$,
with $\epsilon$  as above.
By comparison, we have
\begin{equation}\label{compareconstantdrift}
P_{f(j)}^{bx^\gamma;\text{Ref}\leftarrow:f(j)}(T_{f(j)-M_j}\le 1)\le P_{f(j)}^{D_j;\text{Ref}\leftarrow,f(j)}(T_{f(j)-M_j}\le 1),
\end{equation}
where $D_j$ is  equal to  the minimum of the original drift on the interval
$[f(j)-M_j,f(j)]$; that is,
$$
D_j= bc^\gamma(\log j)^\frac\gamma{1+\gamma}.
$$
By Markov's inequality, we have for $\lambda>0$,
\begin{equation}\label{Markagain}
P_{f(j)}^{D_j;\text{Ref}\leftarrow,f(j)}(T_{f(j)-M_j}\le 1)\le \exp(\lambda)E_{f(j)}^{D_j;\text{Ref}\leftarrow,f(j)}\exp(-\lambda T_{f(j)-M_j}).
\end{equation}
Using Proposition \ref{3} with $\alpha=f(j)-M_j$, $\beta=f(j)$ and $D=D_j$, we have
\begin{equation}\label{fromprop3}
\begin{aligned}
&E_{f(j)}^{D_j;\text{Ref}\leftarrow:f(j)}\exp(-\lambda T_{f(j)-M_j})=\\
&\frac{2\sqrt{D_j^2+2\lambda}\thinspace e^{-2D_jM_j}}
{(-D_j+\sqrt{D_j^2+2\lambda}\thinspace)\thinspace e^{(-D_j+\sqrt{D_j^2+2\lambda}\thinspace)M_j}+
(D_j+\sqrt{D_j^2+2\lambda}\thinspace)\thinspace e^{(-D_j-\sqrt{D_j^2+2\lambda}\thinspace)M_j}}.
\end{aligned}
\end{equation}

If $\gamma<0$, then $\lim_{j\to\infty}D_j=0$ and $M_j\to\infty$, and
it follows  from \eqref{fromprop3} that
\begin{equation}\label{gamma<0}
E_{f(j)}^{D_j;\text{Ref}\leftarrow:f(j)}\exp(-\lambda T_{f(j)-M_j})
\le K\exp(-\sqrt{2\lambda}\thinspace M_j),
\end{equation}
for some $K>0$.
If $\gamma=0$, then $D_j=b$, for all $j$, and we have from \eqref{fromprop3},
\begin{equation}\label{gamma=0}
\begin{aligned}
&E_{f(j)}^{D_j;\text{Ref}\leftarrow:f(j)}\exp(-\lambda T_{f(j)-M_j})\sim\frac{2\sqrt{b^2+2\lambda}}{-b+\sqrt{b^2+2\lambda}}\exp\big(-(b+(\sqrt{b^2+2\lambda}\thinspace)M_j\big),\\
&\text{as}\ j\to\infty.
\end{aligned}
\end{equation}
Since $M_j=\epsilon c(\log j)^{\frac1{1+\gamma}}$, it follows from \eqref{gamma<0} and \eqref{gamma=0} that
\begin{equation}\label{gammanonpos}
\sum_{j=j_1}^\infty E_{f(j)}^{D_j;\text{Ref}\leftarrow:f(j)}\exp(-\lambda T_{f(j)-M_j})<\infty,
\end{equation}
for all choices of $\lambda>0$ in the case $\gamma<0$, and   for sufficiently large $\lambda$
in the case  $\gamma=0$.
Thus, we conclude  from \eqref{gammanonpos} and  \eqref{Markagain} that
\begin{equation}\label{finalgneg}
\sum_{j=j_1}^\infty P_{f(j)}^{D_j;\text{Ref}\leftarrow,f(j)}(T_{f(j)-M_j}\le 1)<\infty.
\end{equation}
Now  \eqref{BC-finite} follows from \eqref{estBj}, \eqref{firstguy} and \eqref{finalgneg}.
\medskip

We now turn to the case that $\gamma>0$.
Let $\zeta_{j+1}=\inf\{t\ge j+1: X(t)\ge f(j)\}$.
Since the process cannot reach $f(j+1)$ before time $j+1$, it follows that
$T_{f(j)}\le \zeta_{j+1}\le T_{f(j+1)}$.
Let $C_j=\{X(t)=1 \ \text{for some}\ t\in(T_{f(j)},\zeta_{j+1})\}$,
and let $G_j=\{X(t)=1 \ \text{for some}\ t\in (\zeta_{j+1},T_{f(j+1)})\}$.
Then $B_j=C_j\cup G_j$; thus,
\begin{equation}\label{BCG}
P_1(B_j)\le P_1(C_j)+P_1(G_j).
\end{equation}

Since the right hand endpoint of the domain is larger than or equal to $f(t_{j+1})$
at all times $t\ge\zeta_{j+1}$, it follows by comparison that
$P_1(G_j)\le P_{f(j)}^{bx^\gamma;\text{Ref}\leftarrow:f(j+1)}(T_1<T_{f(j+1)})$.
Thus, similar to \eqref{pottheory} we have
\begin{equation}\label{Gestimate}
P_1(G_j)\le\frac{\phi(f(j))-\phi(f(j+1))}{\phi(1)-\phi(f(j+1))}.
\end{equation}
As in \eqref{estimatef-M}, but with $\epsilon=0$, we have
\begin{equation}\label{estimatef}
\phi(f(j))\sim\frac1{2b}\big(c(\log j)^{\frac1{1+\gamma}}  \big)^{-\gamma}\thinspace j^{-\frac{2bc^{1+\gamma}}{1+\gamma}}.
\end{equation}
From \eqref{Gestimate}, \eqref{estimatef} and the fact that $\frac{2bc^{1+\gamma}}{1+\gamma}>1$, it follows that
\begin{equation}\label{Gjconverges}
\sum_{j=j_1}^\infty P_1(G_j)<\infty.
\end{equation}

For any $s_j$, we have the estimate
\begin{equation}\label{Cest}
P_1(C_j)\le P_{f(j)}^{bx^\gamma;\text{Ref}\leftarrow:f(j)}(T_1\le s_j+1)+P_1^{b;\text{Ref}\rightarrow:1}(T_{f(j)}>s_j).
\end{equation}
Here is the explanation for the above estimate.
To check whether or not the event $C_j$ occurs, one waits until time $T_{f(j)}$, at which time
the process has first reached $f(j)$. Of course $T_{f(j)}\ge j$. If in fact,
$T_{f(j)}\ge j+1$, then $\zeta_{j+1}=T_{f(j)}$ and $C_j$ does not occur.
Otherwise, one watches the process between time $T_{f(j)}$ and time $j+1$.
If the process hit 1 in this time interval, whose length is no more than $1$,
 then $C_j$ occurs.
(Note that during this interval of time, the right hand boundary for reflection  is always
at least $f(j)$.)
 Otherwise, $C_j$ has not yet occurred,
 but one continues to watch
the process after time $j+1$ until the first time the process is again greater than or equal
to $f(j)$.
If the process reaches 1 in this interval, then $C_j$ occurs, while if not, then
we conclude that $C_j$ did not occur.  (Note that if $X(j+1)\ge f(j)$, then the length of this
final time interval is 0.) The random variable denoting the length of this final time interval is stochastically
dominated by the random variable $T_{f(j)}$ under $P_1^{b;\text{Ref}\rightarrow:1}$, since
the actually drift is always larger than or equal to $b$ everywhere, and the actual starting point
of the process at the beginning of this final time interval is certainly greater than or equal to 1.
In the estimate \eqref{Cest}, one should think of $s_j$ as a possible value for the length of this final
time interval.

We first estimate
$P_1^{b;\text{Ref}\rightarrow:1}(T_{f(j)}>s_j)$, the second term on the right hand side of \eqref{Cest}.
By Markov's inequality, for any $\lambda>0$,
\begin{equation}\label{Markin}
P_1^{b;\text{Ref}\rightarrow:1}(T_{f(j)}>s_j)\le
\exp(-\lambda s_j)E_1^{b;\text{Ref}\rightarrow:1}\exp(\lambda T_{f(j)}).
\end{equation}
Applying Proposition 2 with $D=b$, $x=1$ and $\beta=f(j)=c(\log j)^{\frac1{1+\gamma}}$,
we have
\begin{equation}\label{anothermarkov}
E_1^{b;\text{Ref}\rightarrow:1}\exp(\frac{b^2}2 T_{f(j)})=
\frac{\exp\Big(b\big(c(\log j)^{\frac1{1+\gamma}}-1\big)\Big)}{1+b(c(\log j)^{\frac1{1+\gamma}}-1)}.
\end{equation}
Letting
\begin{equation}\label{sj}
s_j=\frac4{b^2}\log j,
\end{equation}
it follows from \eqref{Markin} with $\lambda=\frac{b^2}2$, \eqref{anothermarkov} and the fact that $\gamma>0$ that
\begin{equation}\label{finalsecondterm}
\sum_{j=j_1}^\infty P_1^{b;\text{Ref}\rightarrow:1}(T_{f(j)}>s_j)<\infty.
\end{equation}

We now estimate  $P_{f(j)}^{bx^\gamma;\text{Ref}\leftarrow:f(j)}(T_1\le s_j+1)$, the first term on the right hand side
 of \eqref{Cest}, where  $s_j$ has now been defined in \eqref{sj}.
Note that by the strong Markov property,
$T_1=T_{[f(t_j)]}+\sum_{i=2}^{[f(t_j)]} (T_i-T_{i-1})$, where
$\{T_i-T_{i-1}\}_{i=2}^{[f(t_j)]}$ and $T_{[f(t_j)]}$ are independent random variables
under $P_{f(j)}^{bx^\gamma;\text{Ref}\leftarrow:f(j)}$, and
 $T_i-T_{i-1}$ is distributed as $T_{i-1}$ under $P_i^{bx^\gamma;\text{Ref}\leftarrow:f(j)}$.
Let $\{X_i\}_{i=2}^{[f(j)]}$ be independent random variables with
$X_i$ distributed as $T_1$ under $P_2^{D_i;\text{Ref}\leftarrow:2}$,
where
\begin{equation}\label{Diformula}
D_i=b(i-1)^\gamma.
\end{equation}
We will use the generic $P$ and $E$ for calculating probabilities and expectations for the $X_i$.
Note that $D_i$ is the minimum of the original drift on the interval $[i-1,i]$.
Also note that  when one considers $T_{i-1}$ under $P_i^{bx^\gamma;\text{Ref}\leftarrow:f(j)}$, the process
gets reflected at $f(j)$, which is to the right of the starting point $i$, while when one considers
$T_1$ under $P_2^{D_i;\text{Ref}\leftarrow:2}$, the process gets reflected at its starting point.
Thus, by comparison, it follows that the distribution of $T_i-T_{i-1}$ under
$P_i^{bx^\gamma;\text{Ref}\leftarrow:f(j)}$
dominates the distribution of $X_i$, and consequently, the distribution of
$T_1$ under $P_{f(j)}^{bx^\gamma;\text{Ref}\leftarrow:f(j)}$ dominates the distribution of
$\sum_{i=2}^{[f(j)]} X_i$.
Thus,  we have
\begin{equation}\label{dominateiid}
 P_{f(j)}^{bx^\gamma;\text{Ref}\leftarrow:f(j)}(T_1\le s_j+1)\le  P(\sum_{i=2}^{[f(j)]}X_i\le s_j+1).
\end{equation}

By Markov's inequality, we have for any $\lambda>0$,
\begin{equation}\label{Markovineq}
\begin{aligned}
&P(\sum_{i=2}^{[f(j)]}X_i\le s_j+1)\le \exp(\lambda (s_j+1))E\exp(-\lambda\sum_{i=2}^{[f(j)]}X_i)
=\\
& \exp(\lambda (s_j+1))\prod_{i=2}^{[f(j)]}E_2^{D_i;\text{Ref}\leftarrow:2}\exp(-\lambda T_1).
\end{aligned}
\end{equation}
Applying Proposition \ref{3} with $\alpha=1$, $\beta=2$ and $D=D_i$,
we have
\begin{equation}\label{appprop3}
\begin{aligned}
&E_2^{D_i;\text{Ref}\leftarrow:2}\exp(-\lambda T_1)=\\
&\frac{2\sqrt{D_i^2+2\lambda}\thinspace e^{-2D_i}}
{(-D_i+\sqrt{D_i^2+2\lambda}\thinspace)\thinspace e^{(-D_i+\sqrt{D_i^2+2\lambda}\thinspace)}+
(D_i+\sqrt{D_i^2+2\lambda}\thinspace)\thinspace e^{(-D_i-\sqrt{D_i^2+2\lambda}\thinspace)}}.
\end{aligned}
\end{equation}
For fixed $\lambda>0$,
$-D_i+\sqrt{D_i^2+2\lambda}\sim\frac\lambda{D_i}$, as $D_i\to\infty$. Thus,
\eqref{appprop3} yields
\begin{equation}\label{asymexp}
E_2^{D_i;\text{Ref}\leftarrow:2}\exp(-\lambda T_1)\sim\frac{2D_i^2}\lambda \exp(-2D_i),\ \text{as}\ D_i\to\infty.
\end{equation}
From \eqref{Diformula} and \eqref{asymexp}, it follows that there exists a $K_0>0$ such that
\begin{equation}\label{productmgfs}
\begin{aligned}
&\prod_{i=2}^{[f(j)]}E_2^{D_i;\text{Ref}\leftarrow:2}\exp(-\lambda T_1)
\le\prod_{i=2}^{[f(j)]}\frac{2D_i^2K_0}\lambda \exp(-2D_i)=\\
&\prod_{i=1}^{[f(j)]-1}\frac{2K_0b^2i^{2\gamma}}\lambda\exp(-2bi^\gamma).
\end{aligned}
\end{equation}

We have
\begin{equation}\label{producti2gamma}
\prod_{i=1}^{[f(j)]-1}i^{2\gamma}\le (f(j))^{2\gamma f(j)}=\big(c(\log j)^\frac1{1+\gamma}\big)^{2\gamma c(\log j)^\frac1{1+\gamma}}.
\end{equation}
Also, for some $C_\gamma>0$,
$$
\sum_{i=1}^{[f(j)]-1}i^\gamma\ge \frac{(f(j))^{1+\gamma}}{1+\gamma}-C_\gamma(f(j))^\gamma
=\frac{c^{1+\gamma}\log j}{1+\gamma}-C_\gamma c^\gamma(\log j)^\frac{\gamma}{1+\gamma};
$$
thus,
\begin{equation}\label{sumigamma}
\prod_{i=1}^{[f(j)]-1}\exp(-2bi^\gamma)\le \exp\big(2bC_\gamma c^\gamma(\log j)^\frac{\gamma}{1+\gamma}\big)\thinspace j^{-\frac{2bc^{1+\gamma}}{1+\gamma}}.
\end{equation}
Then
from  \eqref{dominateiid}, \eqref{Markovineq},  and \eqref{productmgfs}-\eqref{sumigamma}, we have
\begin{equation}\label{firstterm}
\begin{aligned}
&P_{f(j)}^{bx^\gamma;\text{Ref}\leftarrow:f(j)}(T_1\le s_j+1)\le
\exp(\lambda (s_j+1))\times\\
&(1\vee\frac{2K_0b^2}\lambda)^{c(\log j)^{\frac1{1+\gamma}}}\big(c(\log j)^\frac1{1+\gamma}\big)^{2\gamma c(\log j)^\frac1{1+\gamma}}
 \exp\big(2bC_\gamma c^\gamma(\log j)^\frac{\gamma}{1+\gamma}\big)\thinspace j^{-\frac{2bc^{1+\gamma}}{1+\gamma}}.
\end{aligned}
\end{equation}
From \eqref{sj}, $s_j=\frac4{b^2}\log j$; so
$\exp(\lambda (s_j+1))=e^\lambda j^{\frac{4\lambda}{b^2}}$.
By assumption, $\frac{2bc^{1+\gamma}}{1+\gamma}>1$.
Thus, choosing $\lambda>0$ sufficiently small so that $ \frac4{b^2}\lambda-\frac{2bc^{1+\gamma}}{1+\gamma}<-1$,
and recalling that $\gamma>0$, it follows from \eqref{firstterm} that
\begin{equation}\label{finalfirstterm}
\sum_{j=j_1}^\infty P_{f(j)}^{bx^\gamma;\text{Ref}\leftarrow:f(j)}(T_1\le s_j+1)<\infty.
\end{equation}
(To see this easily, it is useful to convert the long expression on the right hand side of \eqref{firstterm} to exponential
form, similar to what was done in the equality in \eqref{finalsigmaj}.)
From  \eqref{Cest},  \eqref{finalsecondterm} and  \eqref{finalfirstterm} we conclude that
\begin{equation}\label{Cjconverges}
\sum_{j=j_1}^\infty P_1(C_j)<\infty.
\end{equation}
Now \eqref{BCG}, \eqref{Gjconverges} and \eqref{Cjconverges} give
\eqref{BC-finite} and complete the proof of the theorem.
\hfill $\square$

\section{Proof of Theorem \ref{2}}

First we prove Theorem \ref{2} in the case that $\mathcal{K}$ is a ball.
The part of the operator $\frac12\Delta+b\cdot\nabla$ involving radial derivatives is $\frac12\frac{d^2}{dr^2}+(\frac{d-1}{2r}+
b(x)\cdot\frac x{|x|})\frac d{dr}$.
Of course, in general, $b(x)\cdot\frac x{|x|}$ depends not only  on the radial component $r=|x|$ of $x$, but also on
the spherical component $\frac x{|x|}$.
Let
$B^+(r)=\max_{|x|=r}b(x)\cdot\frac x{|x|}$ and
$B^-(r)=\min_{|x|=r}b(x)\cdot\frac x{|x|}$. Then by comparison, if the multi-dimensional process
with radial drift $B^+(|x|)\cdot\frac x{|x|}$ is recurrent,  so is the one with drift $b(x)$, and if
the multi-dimensional process with radial drift $B^-(|x|)\cdot\frac x{|x|}$ is transient,  so is the one with drift $b(x)$.
In the case of a  radial drift  $B(|x|)\cdot\frac x{|x|}$, with
 $\mathcal{K}$  a ball, so that $D_t=f(t)\mathcal{K}$ is a ball, the question of transience/recurrence is equivalent to the question of transience/recurrence considered
in Theorem \ref{1} with drift $B(x)+\frac{d-1}{2x}$ and with $D_t=\big(1,\text{rad}(\mathcal{K})\thinspace f(t)\big)$,
where $\text{rad}(\mathcal{K})$ is the radius of $\mathcal{K}$.
 Thus, if $B(r)\equiv B^+(r)$ and $f(t)$ satisfy the inequalities  \eqref{recurcondition}
in part (i) of Theorem \ref{2} with $\frac{2bc^{1+\gamma}}{1+\gamma}<1$, then the
multi-dimensional process is recurrent, while if
 $B(r)\equiv B^-(r)$ and $f(t)$ satisfy the inequalities
\eqref{transcondition} in part (ii) of Theorem \ref{2} with $\frac{2bc^{1+\gamma}}{1+\gamma}>1$, then the
multi-dimensional process is transient.
(Of course, since $\mathcal{K}$ is a ball, $\text{rad}^\pm(\mathcal{K})$ appearing in Theorem \ref{1}
are equal to $\text{rad}(\mathcal{K})$.)

Now consider the case that $B(r)\equiv B^+(r)$ and $f(t)$ satisfy the inequalities
\eqref{recurcondition} in part (i) of Theorem \ref{2} with $\frac{2bc^{1+\gamma}}{1+\gamma}=1$.
To show recurrence, we need to show recurrence for the one dimensional case
when $B(x)=b x^\gamma+\frac{d-1}{2x}$, for large $x$, and $f(t)=c(\log t)^{\frac1{1+\gamma}}$,
for large $t$, with $\frac{2bc^{1+\gamma}}{1+\gamma}=1$.
Thus, the function $\phi$ appearing in \eqref{phi} must be replaced by
$$
\phi(x)=\int_x^\infty\exp(-\int_1^t(2bs^\gamma+\frac{d-1}s)ds)=
C\int_x^\infty t^{1-d}\exp(-\frac{2bt^{1+\gamma}}{1+\gamma})dt.
$$
(Here $C$ is the appropriate constant. In \eqref{phi} we integrated over $s$ starting from 0 for convenience in order
to prevent such a constant
from entering, however in the present case we can't do this because of the term $\frac{d-1}s$.)
In place of \eqref{phiasymp}, we will now have
$$
\phi(x)\sim \frac C{2b}x^{-\gamma+1-d}\exp(-\frac{2bx^{1+\gamma}}{1+\gamma}).
$$
This causes the term $j^{-\frac\gamma{1+\gamma}}$ on the right hand side of
\eqref{asympphidiff} to be replaced by
$j^{-\frac{\gamma+d-1}{1+\gamma}}$, which in turn  causes $l_j$ in \eqref{ljselection}  to be changed
to $l_j= [\frac{j^{\frac{d-2}{1+\gamma}}}{\log j}
\exp(\frac{2bc^{1+\gamma}}{1+\gamma}j)]$.
Finally, this causes the term
on the right hand side of \eqref{finalsigmaj} to be changed to
$\exp(-\frac{D_j^2}2e^j)\exp\Big(j^{\frac{d-2}{1+\gamma}}(\log j)^{-1}
e^{\frac{2bc^{1+\gamma}}{1+\gamma}j}\log 2M\Big)$.
Recalling that $D_j$ is equal to a positive constant, if $\gamma\ge0$, and $D_j$ is on the order
$j^{\frac\gamma{1+\gamma}}$, if $\gamma<0$, we conclude that if $\frac{2bc^{1+\gamma}}{1+\gamma}=1$, then
the above expression is summable in $j$ if
$d=2$ and $\gamma\ge0$. This proves recurrence when
$\frac{2bc^{1+\gamma}}{1+\gamma}=1$, $d=2$ and $\gamma\ge0$.

We now extend from the radial case to the case of general $\mathcal{K}$.
In \cite{DHS}, the proof of a condition for transience was first given for the radial case.
The extension to the case of general $\mathcal{K}$, which appears as step III in the proof
of Theorem 1.15 in that paper,  followed by
 Lemma 2.1 in that paper. This lemma
 implies that if  one considers two such processes, one  corresponding to $\mathcal{K}_1$ and one corresponding to $\mathcal{K}_2$, where
$\mathcal{K}_1$ is a ball and $\mathcal{K}_2\supset \bar{\mathcal{K}_1}$, then the process
corresponding to $\mathcal{K}_2$ is transient if the one
corresponding to $\mathcal{K}_1$ is transient.
Lemma 2.1 goes through just as well when the Brownian motion is replaced by our Brownian motion with
drift. This extends our proof of  transience to the case of general $\mathcal{K}$.

In \cite{DHS}, the proof of the condition for recurrence also was  first given in the radial case.
The extension to the general case, which is more involved than in the case of transience, and
which requires the additional condition $\int_0^\infty (f')^2(t)dt<\infty$,
appears in step V in the proof of Theorem 1.15 in that paper.
The analysis in that step also go through when Brownian motion is replaced by our Brownian motion with
drift. This extends the proof of recurrence to the case of general $\mathcal{K}$.

\hfill $\square$

\section{Proof of Theorem \ref{3}}
We will prove the theorem for the one-dimensional case. The proof for the multi-dimensional case
follows from the proof of the one-dimensional case, similar to the way the proof of Theorem \ref{2} follows
from the proof of Theorem \ref{1}.
Let $P_2$ and $E_2$ denote probabilities and expectations for the process starting from $x=2$ at time 0.

Let $t_j=e^j$ as in the proof of part (i) of Theorem \ref{1}.
We have
\begin{equation}\label{expupperbd}
E_2T_1\le t_1+\sum_{j=1}^\infty t_{j+1}P_2(T_1\ge t_j)=e+\sum_{j=1}^\infty e^{j+1}P_2(T_1\ge t_j).
\end{equation}
Recall the definition of $j_0$ and of $A_{j+1}$ from the beginning of the proof of part (i)
of Theorem \ref{1}.  From \eqref{Ajcond} we have for $j\ge j_0+1$,
\begin{equation}\label{intersectA}
P_2(T_1\ge t_j)\le P_2(\cap_{i=j_0}^{j-1}A_{i+1}^c)\le\prod_{i=j_0}^{j-1}\Big( 1-P_{f(t_i)}^{bx^\gamma;\text{Ref}\leftarrow:f(t_{i+1})}(T_1\le t_{i+1}-t_i)\Big).
\end{equation}
If we show that
\begin{equation}\label{goto0}
\lim_{j\to\infty}P_{f(t_j)}^{bx^\gamma;\text{Ref}\leftarrow:f(t_{j+1})}(T_1\le t_{j+1}-t_j)=1,
\end{equation}
then it will certainly follow from
\eqref{expupperbd} and \eqref{intersectA}  that
$E_2T_1<\infty$, proving positive recurrence.
In order to prove \eqref{goto0}, it suffices  from  \eqref{key} to prove that for some choice of positive integers $\{l_j\}_{j=j_0}^\infty$,
\begin{equation}\label{keyposrec1}
\lim_{j\to\infty}\big(P_{f(t_j)}^{bx^\gamma;\text{Ref}\leftarrow:f(t_{j+1})}(T_{f(t_{j+1})}<T_1)\big)^{l_j}=0
\end{equation}
and
\begin{equation}\label{keyposrec2}
\lim_{j\to\infty}P_{f(t_j)}^{bx^\gamma;\text{Ref}\leftarrow:f(t_{j+1})}(\sigma^{(j)}_{l_j}>t_{j+1}-t_j)=0.
\end{equation}
From  \eqref{pottheory}, \eqref{asympphidiff} and the fact that $\lim_{y\to\infty}(1-\frac1y)^{yg(y)}=0$, if
$\lim_{y\to\infty}g(y)=\infty$, it follows that
\eqref{keyposrec1} holds if
we choose
\begin{equation}\label{ljagain}
l_j=[j^{\frac\gamma{1+\gamma}}(\log j)\exp(\frac{2bc^{1+\gamma}}{1+\gamma}j)].
\end{equation}
With this choice of $l_j$, we have from
\eqref{key3est},
\begin{equation}\label{finalpositiverec}
P_{f(t_j)}^{bx^\gamma;\text{Ref}\leftarrow:f(t_{j+1})}(\sigma^{(j)}_{l_j}>t_{j+1}-t_j)\le\exp(-\frac{D_j^2}2e^j)
\exp\big(j^{\frac\gamma{1+\gamma}}(\log j)e^{\frac{2bc^{1+\gamma}}{1+\gamma}j}\log 2M\big),
\end{equation}
where, as noted after \eqref{finalsigmaj}, $D_j$ is equal to a postive constant  if $\gamma\ge0$,
and $D_j$ is on the order $j^{\frac\gamma{1+\gamma}}$, if $\gamma\in(-1,0)$.
Thus, \eqref{keyposrec2} follows from \eqref{finalpositiverec} if
$\frac{2bc^{1+\gamma}}{1+\gamma}<1$
\hfill $\square$

\section{Proof of Theorem \ref{liminf}}
As in the proof of Theorem \ref{1}, we can assume that $b$ and $f$ satisfy \eqref{bfassump}.
We will first show that
\begin{equation}\label{lowerbd}
\liminf_{t\to\infty}\frac{X(t)}{f(t)}\le  \rho\ \ \text{a.s.}, \ \text{for any}\
\rho>\big(1-\frac{1+\gamma}{2bc^{1+\gamma}}\big)^\frac1{1+\gamma}.
\end{equation}
The proof of \eqref{lowerbd}
is just a small variant of the proof of recurrence in Theorem \ref{1}; that is, part (i) of Theorem \ref{1}.
As in that proof, let $t_j=e^j$. Recalling the definition of $j_0$ appearing at the very beginning of the proof
of part (i) of Theorem \ref{1}, it follows from \eqref{bfassump} that  $f(t_j)=cj^{\frac1{1+\gamma}}$, for $j\ge j_0$.
In that proof, for $j\ge j_0$, $A_{j+1}$ was defined as the event that the process hits 1 at some time
$t\in[t_j,t_{j+1}]$. For the present proof, we define instead, for each $\rho\in(0,1)$, the event
$A^{(\rho)}_{j+1}$  that the process $X(t)$ satisfies
$X(t)\le \rho f(t_j)$ for some $t\in[t_j,t_{j+1}]$.
We  mimic the proof of Theorem \ref{1}-i up through
\eqref{phiasymp}, using $A^{(\rho)}_{j+1}$ in place of
$A_{j+1}$, replacing the stopping time $T_1$
by the stopping time $T_{\rho f(t_j)}$, and replacing $\phi(1)$ by $\phi(\rho f(t_j))$.
Instead of \eqref{lfto0}, we obtain
\begin{equation}\label{lfto0new}
\begin{aligned}
&1-\big(P_{f(t_j)}^{bx^\gamma;\text{Ref}\leftarrow:f(t_{j+1})}(T_{f(t_{j+1})}<T_{\rho f(t_j)})\big)^{l_j}
\ge\frac12l_j\frac{\phi(f(t_j))-\phi(f(t_{j+1}))}{\phi(\rho f(t_j))-\phi(f(t_{j+1}))}, \\
& \text{for sufficiently large}\ j,\
 \text{if}\
\lim_{j\to\infty}l_j\frac{\phi(f(t_j))}{\phi(\rho f(t_j))}=0.
\end{aligned}
\end{equation}
Instead of \eqref{asympphidiff}, we have
\begin{equation}\label{asympphidiffnew}
\begin{aligned}
&\frac{\phi(f(t_j))-\phi(f(t_{j+1}))}{\phi(\rho f(t_j))-\phi(f(t_{j+1}))}\ge K_1\frac{\phi(f(t_j))}
{\phi(\rho f(t_j))}
\ge K_2\exp\big(-\frac{2bc^{1+\gamma}}{1+\gamma}(1-\rho^{1+\gamma})j\big),\\
& \text{for sufficiently large}\ j,
\end{aligned}
\end{equation}
for constants $K_1,K_2>0$.
From \eqref{lfto0new} and \eqref{asympphidiffnew}, it follows that
\eqref{key1} with $T_1$ replaced by $T_{\rho f(t_j)}$ will hold if we define $l_j\in\mathbb{N}$ by
\begin{equation}\label{ljselectionnew}
l_j=[\frac1{j\log j}
\exp\big(\frac{2bc^{1+\gamma}}{1+\gamma}(1-\rho^{1+\gamma})j\big)],
\end{equation}
since then the general term,
$1-\big(P_{f(t_j)}^{bx^\gamma;\text{Ref}\leftarrow:f(t_{j+1})}(T_{f(t_{j+1})}<T_{\rho f(t_j)})\big)^{l_j}$, will be on the
order at least $\frac 1{j\log j}$.

We now continue to mimic the proof of Theorem \ref{1}-i, starting from the paragraph after
\eqref{ljselection} and up through \eqref{key3est}.
We then insert the present $l_j$ from \eqref{ljselectionnew} in \eqref{key3est} to obtain
\begin{equation}\label{finalsigmajnew}
\begin{aligned}
&P_{f(t_j)}^{bx^\gamma;\text{Ref}\leftarrow:f(t_{j+1})}
(\sigma^{(j)}_{l_j}>t_{j+1}-t_j)\le \exp(-\frac{D_j^2}2e^j)(2M)^{\frac1{j\log j}
\exp\big(\frac{2bc^{1+\gamma}}{1+\gamma}(1-\rho^{1+\gamma})j\big)}=\\
& \exp(-\frac{D_j^2}2e^j)\exp\Big(\frac1{j\log j}e^{\frac{2bc^{1+\gamma}}{1+\gamma}(1-\rho^{1+\gamma})j}\log 2M\Big),\
\text{for sufficiently large}\ j.
\end{aligned}
\end{equation}
Recalling that $D_j$ is equal to a positive constant, if $\gamma\ge0$,  and that $D_j$
is on the order $j^\frac{\gamma}{1+\gamma}$, if $\gamma<0$, it follows
that the right hand side of \eqref{finalsigmajnew} is summable in $j$ if $\frac{2bc^{1+\gamma}}{1+\gamma}(1-\rho^{1+\gamma})<1$, or equivalently,
if $\rho>\big(1-\frac{1+\gamma}{2bc^{1+\gamma}}\big)^\frac1{1+\gamma}$.
Analogous to the proof of Theorem \ref{1}, we conclude then that $P_1(A^{(\rho)}_j\ \text{i.o.})=1$ for $\rho$ as above.
From the definition of $A^{(\rho)}_j$ and the fact that
$f$ is increasing, we conclude that \eqref{lowerbd} holds.

To complete the proof of Theorem \ref{liminf}, we will prove
that
\begin{equation}\label{upperbd}
\liminf_{t\to\infty}\frac{X(t)}{f(t)}\ge \rho\ \ \text{a.s.},
\ \text{for any}\ \rho<\big(1-\frac{1+\gamma}{2bc^{1+\gamma}}\big)^\frac1{1+\gamma}.
\end{equation}
For this direction, we will need some new ingredients.
Recalling  again the definition of $j_0$ appearing at the very beginning of the proof
of part (i) of Theorem \ref{1}, it follows from \eqref{bfassump}
that $f(t)=c(\log t)^{\frac1{1+\gamma}}$ for $t\ge e^{j_0}$.
Let $\tau_1=\inf\{t\ge e^{j_0}: X(t)=f(t)\}$, and for $j\ge2$, let
 $\tau_j=\inf\{t\ge\tau_{j-1}+1:X(t)=f(t)\}$.
 By the remarks in the paragraph preceding Theorem \ref{liminf},
it follows that $\tau_j<\infty$ a.s. $[P_1]$, for all $j$.
By construction, we have
\begin{equation}\label{jbound}
\tau_j>j,\ \text{for all}\ j\ge 1.
\end{equation}

Let $\epsilon\in(0,1)$ and let $\rho\in(0,1)$.
Define $s=s(t)$ by
\begin{equation}\label{s(t)}
\rho(1-2\epsilon)f(s)=\rho(1-\epsilon)f(t),\
\text{for}\ t\ge e^{j_0}.
\end{equation}
Since $f(t)=c(\log t)^{\frac1{1+\gamma}}$, we have
\begin{equation}\label{s(t)again}
s(t)=t^{(\frac{1-\epsilon}{1-2\epsilon})^{1+\gamma}}.
\end{equation}
Of course, $f(s(t))=\frac{1-\epsilon}{1-2\epsilon}f(t)$.
For $j\ge j_0$, define $B_j$ to be the event that the following three inequalities hold:

\noindent i. $X(t)\ge (1-\epsilon)f(\tau_j),\ \tau_j\le t\le \tau_j+1$;

\noindent ii. $X(t)\ge\rho(1-\epsilon)f(\tau_j),\ \tau_j+1\le t\le \tau_{j+1}$;

\noindent iii. $\tau_{j+1}\le s(\tau_j)$.

\noindent (We have suppressed the dependence of $B_j$ on $\epsilon$ and $\rho$.)
\noindent It follows from \eqref{s(t)} that on the event $B_j$ one has
$X(t)\ge (1-2\epsilon)\rho f(t)$, for all $t\in [\tau_j,\tau_{j+1}]$.
Thus, for any $N$, on the event $\cap_{j=N}^\infty B_j$, one has
$\liminf_{t\to\infty}\frac{X(t)}{f(t)}\ge (1-2\epsilon)\rho$.
We will complete the proof of \eqref{upperbd} by showing that
\begin{equation}\label{finaltask}
\lim_{M\to\infty}P_1(\cap_{j=M}^\infty B_j)=1,
\end{equation}
for  all $\rho<\big(1-\frac{1+\gamma}{2bc^{1+\gamma}}\big)^\frac1{1+\gamma}$ and all sufficiently small
$\epsilon$ (depending on $\rho$).

We write
\begin{equation}\label{Bconditioning}
P_1(\cap_{j=M}^NB_j)=\prod_{j=M}^NP_1(B_j|\cap_{i=M}^{j-1}B_i),
\end{equation}
where $\cap_{i=M}^{M-1}B_i$ denotes the entire probability space.
Let
$$
C_j=\{X(t)\ge (1-\epsilon)f(\tau_j),\ \tau_j\le t\le \tau_j+1\}.
$$
 (Note that $C_j$ depends
on the random variable $\tau_j$.)
Let $P_{(1-\epsilon)f(\tau_j)}^{bx^\gamma}$ denote probabilities for the diffusion process corresponding
to $L_{bx^\gamma}$ without reflection, starting from $(1-\epsilon)f(\tau_j)$.
Noting that if $\tau_{j+1}\le s(\tau_j)$, then $X(\tau_{j+1})=f(\tau_{j+1})\le f(s(\tau_j))$, it
follows by the strong Markov property and comparison that
\begin{equation}\label{Bprob}
\begin{aligned}
&P_1(B_j|\cap_{i=M}^{j-1}B_i, \tau_j)\ge\\
& P_{(1-\epsilon)f(\tau_j)}^{bx^\gamma}(T_{\rho(1-\epsilon)f(\tau_j)}>T_{f(s(\tau_j))},\ T_{f(s(\tau_j))}\le s(\tau_j)-\tau_j-1)-
P_1(C_j^c|\tau_j).
\end{aligned}
\end{equation}
Also,
\begin{equation}\label{withoutC}
\begin{aligned}
&P_{(1-\epsilon)f(\tau_j)}^{bx^\gamma}(T_{\rho(1-\epsilon)f(\tau_j)}>T_{f(s(\tau_j))},\ T_{f(s(\tau_j))}\le s(\tau_j)-\tau_j-1)=\\
&P_{(1-\epsilon)f(\tau_j)}^{bx^\gamma}(T_{\rho(1-\epsilon)f(\tau_j)}>T_{f(s(\tau_j))},\ T_{f(s(\tau_j))}\wedge
T_{\rho(1-\epsilon)f(\tau_j)}\le s(\tau_j)-\tau_j-1)\ge\\
&P_{(1-\epsilon)f(\tau_j)}^{bx^\gamma}(T_{\rho(1-\epsilon)f(\tau_j)}>T_{f(s(\tau_j))})-\\
&P_{(1-\epsilon)f(\tau_j)}^{bx^\gamma}(T_{f(s(\tau_j))}\wedge
T_{\rho(1-\epsilon)f(\tau_j)}\ge s(\tau_j)-\tau_j-1).
\end{aligned}
\end{equation}
In order to get a lower bound on $P_1(B_j|\cap_{i=M}^{j-1}B_i, \tau_j)$, we will bound
$P_1(C_j^c|\tau_j)$ and $P_{(1-\epsilon)f(\tau_j)}^{bx^\gamma}(T_{f(s(\tau_j))}\wedge
T_{\rho(1-\epsilon)f(\tau_j)}\ge s(\tau_j)-\tau_j-1)$ from above, and we will calculate the asymptotic behavior of
$P_{(1-\epsilon)f(\tau_j)}^{bx^\gamma}(T_{\rho(1-\epsilon)f(\tau_j)}>T_{f(s(\tau_j))})$.

We start with $P_1(C_j^c|\tau_j)$.
Let $P_0^{\text{BM}}$ denote probabilities for a standard Brownian motion starting from 0,
and let $\hat T_x=\min(T_x, T_{-x})$, for $x>0$.
By the strong Markov property and comparison  we clearly have
\begin{equation}\label{Cc}
P_1(C_j^c|\tau_j)\le P_0^{\text{BM}}(\hat T_{\epsilon f(\tau_j)}\le 1).
\end{equation}
From \cite[Theorem 2.2.2]{P}, we have
$P_0^{\text{BM}}(\hat T_x\le t)\le 2\exp(-\frac{x^2}{2t})$. Thus
from \eqref{Cc} we obtain
\begin{equation}\label{Ccest}
P_1(C_j^c|\tau_j)\le 2\exp\big(-\frac12\epsilon^2 c^2(\log \tau_j)^{\frac2{1+\gamma}}\big).
\end{equation}

We now turn to $P_{(1-\epsilon)f(\tau_j)}^{bx^\gamma}(T_{f(s(\tau_j))}\wedge
T_{\rho(1-\epsilon)f(\tau_j)}\ge s(\tau_j)-\tau_j-1)$.
Denote by $Y(t)$ the diffusion corresponding to the operator
$\frac12\frac{d^2}{dx^2}+bx^\gamma\frac d{dx}$ and the measure $P_{(1-\epsilon)f(\tau_j)}^{bx^\gamma}$, and denote by
$\mathcal{W}(t)$ standard Brownian motion
  on $\big(\rho(1-\epsilon)f(\tau_j),\infty)$
 with reflection at the  endpoint.
Denote probabilities for this Brownian motion starting from $x$ by $P_x^{0;\text{Ref}\rightarrow:\rho(1-\epsilon)f(\tau_j)}$.
(The superscript 0 signifies 0 drift.) Since the drift of the $Y$ diffusion is positive,
we can couple $Y(t)$ and $\mathcal{W}(t)$ so that $\mathcal{W}(t)\le Y(t)$, for all $t\in[0,T_{\rho(1-\epsilon)f(\tau_j)}\wedge
T_{f(s(\tau_j))}]$, where $T_{\rho(1-\epsilon)f(\tau_j)}$
and $T_{f(s(\tau_j))}$ refer to the hitting times for the $Y$ process.
(Note that we have been using the generic $T_a$ for the hitting time of $a$ for any process, the process
in question being inferred from the probability measure which appears with it.)
Thus, for any $t>0$,
\begin{equation}\label{hittingcomp}
P_{(1-\epsilon)f(\tau_j)}^{bx^\gamma}(T_{f(s(\tau_j))}\wedge
T_{\rho(1-\epsilon)f(\tau_j)}\ge t)\le
P_{(1-\epsilon)f(\tau_j)}^{0;\text{Ref}\rightarrow:\rho(1-\epsilon)f(\tau_j)}(T_{f(s(\tau_j))}\ge t).
\end{equation}
For ease of notation, in the analysis below, we let
$L_1=\rho(1-\epsilon)f(\tau_j)$, $L_2=(1-\epsilon)f(\tau_j)$ and $L_3=f(s(\tau_j))$.
Let $P_x^{\text{BM}}$ denote probabilities for a standard Brownian motion starting from $x$.
By the isotropy of Brownian motion and the fact that a reflected Brownian motion can be realized as the absolute
value of a Brownian motion, we have
\begin{equation}\label{isotr}
P_{(1-\epsilon)f(\tau_j)}^{0;\text{Ref}\rightarrow:\rho(1-\epsilon)f(\tau_j)}(T_{f(s(\tau_j))}\ge t)=
P^{\text{BM}}_{L_2-L_1}(T_{L_3-L_1}\wedge T_{-(L_3-L_1)}\ge t).
\end{equation}
Using Brownian scaling for the first inequality and symmetry for the second one,
we have
\begin{equation}\label{scaling}
\begin{aligned}
&P^{\text{BM}}_{L_2-L_1}(T_{L_3-L_1}\wedge T_{-(L_3-L_1)}\ge t)=P^{\text{BM}}_{\frac{L_2-L_1}{L_3-L_1}}(T_1\ge \frac t{(L_3-L_1)^2})\le\\
&P^{\text{BM}}_0(T_1\ge \frac t{(L_3-L_1)^2}).
\end{aligned}
\end{equation}
As is well-known, there exist $\kappa,\lambda>0$ such that
$P^{\text{BM}}_0(T_1\ge t)\le \kappa e^{-\lambda t}$, for all $t\ge0$.
Thus, from \eqref{hittingcomp}-\eqref{scaling}, choosing $t=s(\tau_j)-\tau_j-1$,  we conclude that
\begin{equation}\label{finalfromBM}
P_{(1-\epsilon)f(\tau_j)}^{bx^\gamma}(T_{f(s(\tau_j))}\wedge
T_{\rho(1-\epsilon)f(\tau_j)}\ge s(\tau_j)-\tau_j-1)\le
\kappa \exp\big(-\frac {\lambda (s(\tau_j)-\tau_j-1)}{(L_3-L_1)^2}\big).
\end{equation}

We now calculate the asymptotic behavior
of  $P_{(1-\epsilon)f(\tau_j)}^{bx^\gamma}(T_{\rho(1-\epsilon)f(\tau_j)}>T_{f(s(\tau_j))})$
via that of  $P_{(1-\epsilon)f(\tau_j)}^{bx^\gamma}(T_{\rho(1-\epsilon)f(\tau_j)}<T_{f(s(\tau_j))})$.
Similar to \eqref{pottheory}, we have
\begin{equation}\label{phithm4}
P_{(1-\epsilon)f(\tau_j)}^{bx^\gamma}(T_{\rho(1-\epsilon)f(\tau_j)}<T_{f(s(\tau_j))})=
\frac{\phi(f(s(\tau_j)))-\phi((1-\epsilon)f(\tau_j))}{\phi(f(s(\tau_j)))-\phi(\rho(1-\epsilon)f(\tau_j))}.
\end{equation}
In light of  \eqref{s(t)again} and
\eqref{phiasymp}, it follows from \eqref{phithm4} that
\begin{equation}\label{asympsmall}
P_{(1-\epsilon)f(\tau_j)}^{bx^\gamma}(T_{\rho(1-\epsilon)f(\tau_j)}<T_{f(s(\tau_j))})\sim
\frac{\phi((1-\epsilon)f(\tau_j))}{\phi(\rho(1-\epsilon)f(\tau_j))},\ \text{as}\ \tau_j\to\infty.
\end{equation}
From \eqref{phiasymp} and the fact that $f(t)=c(\log t)^\frac1{1+\gamma}$, we have
\begin{equation}\label{intermediate}
\frac{\phi((1-\epsilon)f(\tau_j))}{\phi(\rho(1-\epsilon)f(\tau_j))}
\sim \rho^\gamma\tau_j^{-\frac{2bc^{1+\gamma}(1-\epsilon)^{1+\gamma}}{1+\gamma}(1-\rho^{1+\gamma})},
\ \text{as}\ \tau_j\to\infty.
\end{equation}
Thus, from \eqref{intermediate} and \eqref{asympsmall},
\begin{equation}\label{asympsmallrate}
P_{(1-\epsilon)f(\tau_j)}^{bx^\gamma}(T_{\rho(1-\epsilon)f(\tau_j)}<T_{f(s(\tau_j))})\sim
\rho^\gamma\tau_j^{-\frac{2bc^{1+\gamma}(1-\epsilon)^{1+\gamma}}{1+\gamma}(1-\rho^{1+\gamma})},
\ \text{as}\ \tau_j\to\infty.
\end{equation}

From \eqref{Bprob}, \eqref{withoutC}, \eqref{Ccest}, \eqref{finalfromBM}, \eqref{asympsmallrate} and \eqref{jbound}
we have
\begin{equation}\label{finalest}
\begin{aligned}
&P_1(B_j|\cap_{i=M}^{j-1}B_i, \tau_j)\ge1-2\rho^\gamma\tau_j^{-\frac{2bc^{1+\gamma}(1-\epsilon)^{1+\gamma}}{1+\gamma}
(1-\rho^{1+\gamma})}-\\
&2\exp\big(-\frac12\epsilon^2 c^2(\log \tau_j)^{\frac2{1+\gamma}}\big)-
\kappa \exp\big(-\frac {\lambda (s(\tau_j)-\tau_j-1)}{(L_3-L_1)^2}\big),\\
&\text{for sufficiently large}\ j.
\end{aligned}
\end{equation}
Since
$$
\frac{s(\tau_j)-\tau_j-1}{(L_3-L_1)^2}=\frac{\tau_j^{(\frac{1-\epsilon}{1-2\epsilon})^{1+\gamma}}-\tau_j-1}
{(\frac1{1-2\epsilon}-\rho)^2(1-\epsilon)^2c^2(\log \tau_j)^{\frac2{1+\gamma}}},
$$
and since $\tau_j>j$ by \eqref{jbound}, it follows that  \eqref{finalest} holds with
$\tau_j$ replaced by $j$ on the right hand side of the inequality. Then taking the conditional expectation
with respect to $\cap_{i=M}^{j-1}B_i$  on the left hand side, to remove   $\tau_j$ from the conditioning there,
we conclude that
\begin{equation}\label{finalestagain}
\begin{aligned}
&P_1(B_j|\cap_{i=M}^{j-1}B_i)\ge
1-2\rho^\gamma j^{-\frac{2bc^{1+\gamma}(1-\epsilon)^{1+\gamma}}{1+\gamma}
(1-\rho^{1+\gamma})}-\\
&2\exp\big(-\frac12\epsilon^2 c^2(\log j)^{\frac2{1+\gamma}}\big)-
\kappa \exp\big(-\frac{j^{(\frac{1-\epsilon}{1-2\epsilon})^{1+\gamma}}-j-1}
{(\frac1{1-2\epsilon}-\rho)^2(1-\epsilon)^2c^2(\log j)^{\frac2{1+\gamma}}}\big),\\
&\text{for sufficiently large}\ j.
\end{aligned}
\end{equation}

Clearly, $\sum_{j=j_0}^\infty\exp\big(-\frac{j^{(\frac{1-\epsilon}{1-2\epsilon})^{1+\gamma}}-j-1}
{(\frac1{1-2\epsilon}-\rho)^2(1-\epsilon)^2c^2(\log j)^{\frac2{1+\gamma}}}\big)<\infty$.
Since by assumption  $\gamma<1$, it follows that
$\sum_{j=j_0}^\infty \exp\big(-\frac12\epsilon^2 c^2(\log j)^{\frac2{1+\gamma}}\big)<\infty$.
 Since $\frac{2bc^{1+\gamma}}{1+\gamma}(1-\rho^{1+\gamma})>1$ is equivalent
to $\rho<\big(1-\frac{1+\gamma}{2bc^{1+\gamma}}\big)^\frac1{1+\gamma}$,
 we also have
$\sum_{j=j_0}^\infty j^{-\frac{2bc^{1+\gamma}(1-\epsilon)^{1+\gamma}}{1+\gamma}(1-\rho^{1+\gamma})}<\infty$,
for all $\rho<\big(1-\frac{1+\gamma}{2bc^{1+\gamma}}\big)^\frac1{1+\gamma}$ and sufficiently small
$\epsilon$ (depending on $\rho$).
Using these facts with \eqref{finalestagain}
and \eqref{Bconditioning},
we conclude that
$$
\lim_{M\to\infty}\lim_{N\to\infty}P_1(\cap_{j=M}^NB_j)=1,
$$
which gives \eqref{finaltask}
for  all $\rho<\big(1-\frac{1+\gamma}{2bc^{1+\gamma}}\big)^\frac1{1+\gamma}$ and all sufficiently small
$\epsilon$ (depending on $\rho$).
\hfill $\square$
\section{Proof of Theorem \ref{liminfagain}}

\noindent \it Proof of  (i)\rm. The proof is almost exactly the same as the proof of Theorem \ref{liminf}
starting from \eqref{upperbd}, using
$(\log t)^l$ instead of $(\log t)^\frac1{1+\gamma}$ (and with $b=c=1$). The one place in the proof where this results
in a meaningful difference is in the estimate on
$P_{(1-\epsilon)f(\tau_j)}^{x^\gamma}(T_{\rho(1-\epsilon)f(\tau_j)}<T_{f(s(\tau_j))})$.
We have \eqref{asympsmall} as in the proof of Theorem \ref{liminf}.
From \eqref{phiasymp} and the fact that $f(t)=(\log t)^l$, we have, instead of \eqref{intermediate},
$$
\frac{\phi((1-\epsilon)f(\tau_j))}{\phi(\rho(1-\epsilon)f(\tau_j))}
\sim \rho^\gamma\exp\Big(-\frac2{1+\gamma}(1-\epsilon)^{1+\gamma}(1-\rho^{1+\gamma})(\log \tau_j)^{l(1+\gamma)}\Big).
$$
Using this with \eqref{asympsmall} gives, instead of \eqref{asympsmallrate},
$$
\begin{aligned}
&P_{(1-\epsilon)f(\tau_j)}^{bx^\gamma}(T_{\rho(1-\epsilon)f(\tau_j)}<T_{f(s(\tau_j))})\sim\\
&\rho^\gamma\exp\Big(-\frac2{1+\gamma}(1-\epsilon)^{1+\gamma}(1-\rho^{1+\gamma})(\log \tau_j)^{l(1+\gamma)}\Big),
\ \text{as}\ \tau_j\to\infty.
\end{aligned}
$$
By \eqref{jbound},  $\tau_j>j$. Since $l(1+\gamma)>1$, the right hand side above  is summable for all $\rho\in(0,1)$.
The proof of Theorem \ref{liminf} then gives $\liminf_{t\to\infty}\frac{X(t)}{f(t)}\ge\rho$ a.s.,
for all $\rho\in(0,1)$; thus, $\liminf_{t\to\infty}\frac{X(t)}{f(t)}\ge1$ a.s. Since $X(t)\le f(t)$, we conclude that
$\lim_{t\to\infty}\frac{X(t)}{f(t)}=1$ a.s.
\medskip

\noindent \it Proof of (ii)\rm. We first prove \eqref{q0role}. We follow the same kind of strategy used to prove \eqref{upperbd} in the proof of Theorem \ref{liminf}.
Let $\{\tau_j\}_{j=1}^\infty$ be defined as it is following \eqref{upperbd}.
Let $\epsilon\in(0,1)$ and  $q\in(0,l)$. If we were to define $s(t)$, for $t\ge1$,  by
$$
f(s)-(1+\epsilon)s^q=f(t)-t^q \ (\text{equivalently}, s^l-(1+\epsilon)s^q=t^l-t^q),
$$
 we would have
$s^l=t^l+(1+\epsilon)s^q-t^q\ge t^l+\epsilon t^q$. In light of this, we define for simplicity $s=s(t)$ by
$$
s^l=t^l+\epsilon t^q.
$$
 Then
\begin{equation}\label{f(s(t))}
\begin{aligned}
&f(s(t))=(s(t))^l=t^l+\epsilon t^q;\\
&s(t)=t(1+\epsilon t^{q-l})^\frac1l= t+\epsilon t^{1+q-l}+\text{lower order terms}  ,\ \text{as}\ t\to\infty.
\end{aligned}
\end{equation}
and
\begin{equation}\label{f(s(t))-}
\begin{aligned}
&f(s(t))-(1+\epsilon)(s(t))^q=(s(t))^l-(1+\epsilon)(s(t))^q
=\\
&t^l+\epsilon t^q-(1+\epsilon)(s(t))^q\le t^l-t^q=
f(t)-t^q.
\end{aligned}
\end{equation}
Let $B_j$ be the event that

\noindent i. $X(t)\ge f(\tau_j)-\epsilon\tau_j^q,\ \tau_j\le t\le \tau_j+1$;

\noindent ii. $X(t)\ge f(\tau_j)-\tau_j^q,\ \tau_j+1\le t\le \tau_{j+1}$;

\noindent iii. $\tau_{j+1}\le s(\tau_j)$.

\noindent (We have suppressed the dependence of $B_j$ on $\epsilon$ and $q$.)
\noindent It follows from \eqref{f(s(t))-} that on the event $B_j$ one has
$X(t)\ge f(t)-(1+\epsilon)t^q$, for all $t\in[\tau_j,\tau_{j+1}]$.
Thus, for any $N$, on the event $\cap_{j=N}^\infty B_j$, one has
$$
\limsup_{t\to\infty}\big(f(t)-X(t)-(1+\epsilon)t^q\big)\le0.
$$
Therefore, the proof of \eqref{q0role}   will be completed when we show that
\begin{equation}\label{finaltaskagain}
\lim_{M\to\infty}P_1(\cap_{j=M}^\infty B_j)=1,
\end{equation}
for some $\epsilon\in(0,1)$ and all $q>q_0$.

We write
\begin{equation}\label{Bconditioningagain}
P_1(\cap_{j=M}^NB_j)=\prod_{j=M}^NP_1(B_j|\cap_{i=M}^{j-1}B_i),
\end{equation}
where $\cap_{i=M}^{M-1}B_i$ denotes the entire probability space.
Let
$$
C_j=\{X(t)\ge f(\tau_j)-\epsilon\tau_j^q,\ \tau_j\le t\le \tau_j+1\}.
$$
 (Note that $C_j$ depends
on the random variable $\tau_j$.)
Let $P_{f(\tau_j)-\epsilon \tau_j^q}^{x^\gamma}$ denote probabilities for the diffusion process corresponding
to $L_{x^\gamma}$ without reflection, starting from $f(\tau_j)-\epsilon \tau_j^q$.
Noting that if $\tau_{j+1}\le s(\tau_j)$, then $X(\tau_{j+1})=f(\tau_{j+1})\le f(s(\tau_j))$, it
follows by the strong Markov property and comparison that
\begin{equation}\label{Bprobagain}
\begin{aligned}
&P_1(B_j|\cap_{i=M}^{j-1}B_i, \tau_j)\ge \\
&P_{f(\tau_j)-\epsilon \tau_j^q}^{x^\gamma}(T_{f(\tau_j)-\tau_j^q}>T_{f(s(\tau_j))},\
 T_{f(s(\tau_j))}\le s(\tau_j)-\tau_j-1)-P_1(C_j^c|\tau_j).
\end{aligned}
\end{equation}
Also,
\begin{equation}\label{withoutCagain}
\begin{aligned}
&P_{f(\tau_j)-\epsilon \tau_j^q}^{x^\gamma}(T_{f(\tau_j)-\tau_j^q}>T_{f(s(\tau_j))},\
 T_{f(s(\tau_j))}\le s(\tau_j)-\tau_j-1)=\\
&P_{f(\tau_j)-\epsilon \tau_j^q}^{x^\gamma}(T_{f(\tau_j)-\tau_j^q}>T_{f(s(\tau_j))},\
 T_{f(s(\tau_j))}\wedge T_{f(\tau_j)-\tau_j^q}\le s(\tau_j)-\tau_j-1)
\ge\\
&P_{f(\tau_j)-\epsilon \tau_j^q}^{x^\gamma}(T_{f(\tau_j)-\tau_j^q}>T_{f(s(\tau_j))})-\\
&P_{f(\tau_j)-\epsilon \tau_j^q}^{x^\gamma}(T_{f(s(\tau_j))}\wedge
T_{f(\tau_j)-\tau_j^q}\ge s(\tau_j)-\tau_j-1).
\end{aligned}
\end{equation}
In order to get a lower bound on $P_1(B_j|\cap_{i=M}^{j-1}B_i, \tau_j)$, we will bound
$P_1(C_j^c|\tau_j)$ and $P_{f(\tau_j)-\epsilon \tau_j^q}^{x^\gamma}(T_{f(s(\tau_j))}\wedge
T_{f(\tau_j)-\tau_j^q}\ge s(\tau_j)-\tau_j-1)$ from above, and we will calculate the asymptotic behavior of
$P_{f(\tau_j)-\epsilon \tau_j^q}^{x^\gamma}(T_{f(\tau_j)-\tau_j^q}>T_{f(s(\tau_j))})$.

We start with $P_1(C_j^c|\tau_j)$. We mimic the paragraph
containing \eqref{Cc}, the only change being that $\epsilon f(\tau_j)$ is replaced by $\epsilon \tau_j^q$.
Thus, similar to \eqref{Ccest}, we obtain
\begin{equation}\label{Ccestagain}
P_1(C_j^c|\tau_j)\le 2\exp\big(-\frac12\epsilon^2 \tau_j^{2q}\big).
\end{equation}

We now turn to $P_{f(\tau_j)-\epsilon \tau_j^q}^{x^\gamma}(T_{f(s(\tau_j))}\wedge
T_{f(\tau_j)-\tau_j^q}\ge s(\tau_j)-\tau_j-1)$. We mimic the paragraph following \eqref{Ccest},
the only changes being that $(1-\epsilon)f(\tau_j)$ is replaced by $f(\tau_j)-\epsilon \tau_j^q$,
$\rho(1-\epsilon)f(\tau_j)$ is replaced by $f(\tau_j)-\tau_j^q$ and $b$ is set to 1.
Similar to \eqref{finalfromBM}, we obtain,
\begin{equation}\label{finalfromBMagain}
\begin{aligned}
&P_{f(\tau_j)-\epsilon \tau_j^q}^{x^\gamma}(T_{f(s(\tau_j))}\wedge
T_{f(\tau_j)-\tau_j^q}\ge s(\tau_j)-\tau_j-1)\le
\kappa \exp\big(-\frac {\lambda (s(\tau_j)-\tau_j-1)}{(L_3-L_1)^2}\big),\\
&\text{where}\ L_3=f(s(\tau_j))\ \text{and}\ L_1=f(\tau_j)-\tau_j^q.
\end{aligned}
\end{equation}

We now calculate the asymptotic behavior
of  $P_{f(\tau_j)-\epsilon \tau_j^q}^{x^\gamma}(T_{f(\tau_j)-\tau_j^q}>T_{f(s(\tau_j))})$.
via that of  $P_{f(\tau_j)-\epsilon \tau_j^q}^{x^\gamma}(T_{f(\tau_j)-\tau_j^q}<T_{f(s(\tau_j))})$.
Similar to \eqref{pottheory}, we have
\begin{equation}\label{phithm5}
P_{f(\tau_j)-\epsilon \tau_j^q}^{x^\gamma}(T_{f(\tau_j)-\tau_j^q}<T_{f(s(\tau_j))})=
\frac{\phi(f(s(\tau_j)))-\phi(f(\tau_j)-\epsilon\tau_j^q)}{\phi(f(s(\tau_j)))-\phi(f(\tau_j)-\tau_j^q)}.
\end{equation}
For any $\lambda\in\mathbb{R}$, we have
\begin{equation}\label{qq0}
(t^l+\lambda t^q)^{1+\gamma}=t^{l(1+\gamma)}+\lambda(1+\gamma)t^{l\gamma+q}+\ \text{lower order terms
as}\ t\to\infty.
\end{equation}
We now make the assumption, as in the statement of the theorem, that $q>q_0$. Thus, $l\gamma+q>0$.
Using this  in \eqref{qq0}, along with \eqref{phiasymp} (with $b=1$) and \eqref{phithm5},
and recalling that $f(\tau_j)=\tau_j^l$ and that, from \eqref{f(s(t))}, $f(s(\tau_j))=\tau_j^l+\epsilon \tau_j^q$,
 we conclude that
\begin{equation}\label{asympsmallrateagain}
P_{f(\tau_j)-\epsilon \tau_j^q}^{x^\gamma}(T_{f(\tau_j)-\tau_j^q}<T_{f(s(\tau_j))})\sim
\exp(-2(1-\epsilon)\tau_j^{l\gamma+q}), \ \text{as}\ \tau_j\to\infty.
\end{equation}

From \eqref{Bprobagain}-\eqref{finalfromBMagain} and \eqref{asympsmallrateagain}, we have
\begin{equation*}
\begin{aligned}
&P_1(B_j|\cap_{i=M}^{j-1}B_i, \tau_j)\ge1-2\exp(-2(1-\epsilon)\tau_j^{l\gamma+q})-\\
&2\exp\big(-\frac12\epsilon^2 \tau_j^{2q}\big)-
\kappa \exp\big(-\frac {\lambda (s(\tau_j)-\tau_j-1)}{(L_3-L_1)^2}\big),\ \text{for sufficiently large}\ \tau_j.
\end{aligned}
\end{equation*}
From \eqref{f(s(t))}, we have
$s(\tau_j)-\tau_j-1\ge\frac\epsilon2\tau_j^{1+q-l}$ for large $\tau_j$.
From \eqref{finalfromBMagain} and \eqref{f(s(t))}, we have
$L_3-L_1=f(s(\tau_j))-f(\tau_j)+\tau_j^q=(1+\epsilon)\tau_j^q$.
Thus, for large $\tau_j$,
$$
\frac {s(\tau_j)-\tau_j-1}{(L_3-L_1)^2}\ge\frac{\epsilon}{2(1+\epsilon)^2}\tau_j^{1-q-l}.
$$
If $1-q-l>0$, then we can complete the proof just like we completed the proof of
Theorem \ref{liminf} and conclude that \eqref{finaltaskagain} holds, and thus that \eqref{q0role} holds.
Note that in order to come to this conclusion, we have needed to assume
that $q>q_0=\max(0,-l\gamma)$ and that $1-q-l>0$; that is,
we need  $\max(0,-l\gamma)<1-l$ and $q\in(\max(0,-l\gamma),1-l)$.
A fundamental assumption in the theorem is that $l\in(0,\frac1{1-\gamma})$. For these values
of $l$, the above inequality always holds. Thus,
\eqref{finaltaskagain} holds for those  $q$ which are larger than $q_0$ and sufficiently close to $q_0$.
Consequently, \eqref{q0role} holds for all $q$ which are larger than $q_0$ and sufficiently close to $q_0$.
However, if  \eqref{q0role} holds for some $q$, then clearly it also holds for all larger $q$.
Thus, \eqref{q0role} holds for all $q>q_0$.

We now turn to the proof of \eqref{q0roleagain}.
We have
 $\gamma\in(-1,0]$ and  $q_0=-\gamma l\in[0,l)$.
Let $t_j=j^k$, for $j\ge1$ and some $k>1$ to be fixed later. For $M>0$, let
 $A^M_{j+1}$ be the event that $X(t)\le f(t_j)-Mt_j^{q_0}$ for some $t\in[t_j,t_{j+1}]$.
Clearly, $\limsup_{t\to\infty}  \frac{f(t)-X(t)}{t^{q_0}}\ge M$ on the event
$\{A^M_j\ \text{i.o.}\}$.
The conditional version of the Borel-Cantelli lemma \cite{D} shows that if
\begin{equation}\label{BC-cond-inftyagain}
\sum_{j=1}^\infty P_1(A^M_{j+1}|\mathcal{F}_{t_j})=\infty,
\end{equation}
then  $P_1(A^M_j\ \text{i.o.})=1$
Thus, to prove \eqref{q0roleagain}, it suffices to show that \eqref{BC-cond-inftyagain} holds for all $M>0$.

Since up to time $t_j$, the largest the process can be is $f(t_j)$, and
since
up to time $t_{j+1}$ the time-dependent domain is contained in $[1,f(t_{j+1})]$, it follows
by comparison that
\begin{equation}\label{Ajcondagain}
P_1(A^M_{j+1}|\mathcal{F}_{t_j})\ge P_{f(t_j)}^{x^\gamma;\text{Ref}\leftarrow:f(t_{j+1})}(T_{f(t_j)-Mt_j^{q_0}}\le t_{j+1}-t_j).
\end{equation}
Clearly,
\begin{equation}\label{doublereflect}
\begin{aligned}
&P_{f(t_j)}^{x^\gamma;\text{Ref}\leftarrow:f(t_{j+1})}(T_{f(t_j)-Mt_j^{q_0}}\le t_{j+1}-t_j)=\\
&P_{f(t_j)}^{x^\gamma;\text{Ref}\leftrightarrow:f(t_j)-Mt_j^{q_0},f(t_{j+1})}(T_{f(t_j)-Mt_j^{q_0}}\le t_{j+1}-t_j),
\end{aligned}
\end{equation}
where $P_{f(t_j)}^{x^\gamma;\text{Ref}\leftrightarrow:f(t_j)-Mt_j^{q_0},f(t_{j+1})}$
corresponds to the  $L_{x^\gamma}$ diffusion with reflection at both
$f(t_j)-Mt_j^{q_0}$ and $f(t_{j+1})$.

We  estimate the right hand side of \eqref{doublereflect}.
We have
$$
\{T_{f(t_j)-Mt_j^{q_0}}<T_{f(t_{j+1})}\}-\{T_{f(t_{j+1})}>t_{j+1}-t_j\}\subset
\{T_{f(t_j)-Mt_j^{q_0}}\le t_{j+1}-t_j\}.
$$
Thus,
\begin{equation}\label{keyidea}
\begin{aligned}
&P_{f(t_j)}^{x^\gamma;\text{Ref}\leftrightarrow:f(t_j)-Mt_j^{q_0},f(t_{j+1})}(T_{f(t_j)-Mt_j^{q_0}}\le t_{j+1}-t_j)\ge\\
&P_{f(t_j)}^{x^\gamma;\text{Ref}\leftrightarrow:f(t_j)-Mt_j^{q_0},f(t_{j+1})}
(T_{f(t_j)-Mt_j^{q_0}}<T_{f(t_{j+1})})-\\
&P_{f(t_j)}^{x^\gamma;\text{Ref}\leftrightarrow:f(t_j)-Mt_j^{q_0},f(t_{j+1})}(  T_{f(t_{j+1})}>t_{j+1}-t_j)=\\
&P_{f(t_j)}^{x^\gamma}(T_{f(t_j)-Mt_j^{q_0}}<T_{f(t_{j+1})})-
P_{f(t_j)}^{x^\gamma;\text{Ref}\rightarrow:f(t_j)-Mt_j^{q_0}}(  T_{f(t_{j+1})}>t_{j+1}-t_j),
\end{aligned}
\end{equation}
where $P_{f(t_j)}^{x^\gamma}$
corresponds to the $L_{x^\gamma}$ diffusion without reflection.

Similar to \eqref{pottheory}, we have
\begin{equation}\label{twosided}
P_{f(t_j)}^{x^\gamma}(T_{f(t_j)-Mt_j^{q_0}}<T_{f(t_{j+1})})=
\frac{\phi(f(t_j))-\phi(f(t_{j+1}))}{\phi(f(t_j)-Mt_j^{q_0})-\phi(f(t_{j+1}))},
\end{equation}
where $\phi$ is as in \eqref{phi} with $b=1$.
We now choose $k$ so that $kl(1+\gamma)>1$. Then
since
$$
(f(t_{j+1}))^{1+\gamma}-(f(t_j))^{1+\gamma}=(j+1)^{kl(1+\gamma)}-j^{kl(1+\gamma)}
$$
 is on the order
$j^{kl(1+\gamma)-1}$,
 it follows from
\eqref{twosided} and \eqref{phiasymp} that
\begin{equation}\label{twosidedsimpler}
P_{f(t_j)}^{x^\gamma}(T_{f(t_j)-Mt_j^{q_0}}<T_{f(t_{j+1})})\sim
\frac{\phi(f(t_j))}{\phi(f(t_j)-Mt_j^{q_0})}.
\end{equation}
Now
\begin{equation}\label{anotherasymp}
\begin{aligned}
&(f(t_j))^{1+\gamma}=j^{kl(1+\gamma)};\\
&(f(t_j)-Mt_j^{q_0})^{1+\gamma}=(j^{kl}-Mj^{-kl\gamma})^{1+\gamma}=j^{kl(1+\gamma)}-M(1+\gamma)+o(1),\ \text{as}\ j\to\infty.
\end{aligned}
\end{equation}
Using \eqref{anotherasymp} and \eqref{phiasymp}, we conclude from \eqref{twosidedsimpler} that
\begin{equation}\label{liminftwosided}
\liminf_{j\to\infty}P_{f(t_j)}^{x^\gamma}(T_{f(t_j)-Mt_j^{q_0}}<T_{f(t_{j+1})})>0.
\end{equation}

We now consider $P_{f(t_j)}^{x^\gamma;\text{Ref}\rightarrow:f(t_j)-Mt_j^{q_0}}(  T_{f(t_{j+1})}>t_{j+1}-t_j)$.
By Markov's inequality, we have for $\lambda>0$,
\begin{equation}\label{Markovagain}
\begin{aligned}
&P_{f(t_j)}^{x^\gamma;\text{Ref}\rightarrow:f(t_j)-Mt_j^{q_0}}(  T_{f(t_{j+1})}>t_{j+1}-t_j)\le\\
&\exp\big(-\lambda (t_{j+1}-t_j)\big)E_{f(t_j)}^{x^\gamma;\text{Ref}\rightarrow:f(t_j)-Mt_j^{q_0}}\exp(\lambda T_{f(t_{j+1})}).
\end{aligned}
\end{equation}
We apply Proposition \ref{4} to the expectation on the right hand side of \eqref{Markovagain}, with
$\alpha=f(t_j)-Mt_j^{q_0}=j^{kl}-Mj^{-kl\gamma}$, $\beta=f(t_{j+1})=(j+1)^{kl}$,  $b=1$ and $\lambda=\bar\lambda$, where
$\bar\lambda$ is as in the statement of the proposition.
Then
\begin{equation}\label{exp2}
E_{f(t_j)}^{x^\gamma;\text{Ref}\rightarrow:f(t_j)-Mt_j^{q_0}}\exp(\bar\lambda T_{f(t_{j+1})})\le2.
\end{equation}
Since $\gamma\le0$, we have $\min(\alpha^\gamma,\beta^\gamma)=\beta^\gamma$; thus,
$$
\bar\lambda=\frac{(j+1)^{kl\gamma}}{(2e-1)\big((j+1)^{kl}-j^{kl}+Mj^{-kl\gamma}\big)}.
$$
Now $(j+1)^{kl}-j^{kl}$ is on the order $j^{kl-1}$. Recall from the previous paragraph that we have chosen $k$ so that
$kl(1+\gamma)>1$. Thus,
  the denominator on the right hand side above
 is on the order $j^{kl-1}$, and thus $\bar\lambda$ is on the order   $j^{kl\gamma-kl+1}$.
 Since $t_{j+1}-t_j=(j+1)^k-j^k$ is on the order $j^{k-1}$, the expression
 $(t_{j+1}-t_j)\bar\lambda$ is on the order $j^{k\big(1-l(1-\gamma)\big)}$.
 By assumption, $l(1-\gamma)<1$; thus $\lim_{j\to\infty}(t_{j+1}-t_j)\bar\lambda=\infty$.
 Using this with \eqref{exp2}, and substituting $\lambda=\bar\lambda$ in \eqref{Markovagain},
 we conclude that
 \begin{equation}\label{convto0}
 \lim_{j\to\infty}P_{f(t_j)}^{x^\gamma;\text{Ref}\rightarrow:f(t_j)-Mt_j^{q_0}}(  T_{f(t_{j+1})}>t_{j+1}-t_j)=0.
 \end{equation}
From \eqref{Ajcondagain}-\eqref{keyidea}, \eqref{liminftwosided} and  \eqref{convto0}
we conclude that \eqref{BC-cond-inftyagain} holds for any $M>0$.
\hfill $\square$

\end{document}